\setlist[enumerate,1]{label=(\arabic*), ref=(\arabic*)}
\setlist[enumerate,3]{label=(\roman*), ref=(\roman*)}
\numberwithin{equation}{section}
\theoremstyle{plain}
\newtheorem{thm}{Theorem}[section]
\newtheorem*{thm*}{Theorem}
\newtheorem{lem}[thm]{Lemma}
\newtheorem*{lem*}{Lemma}
\newtheorem*{prop*}{Proposition}
\newtheorem{cor}[thm]{Corollary}
\newtheorem*{cor*}{Corollary}
\newtheorem*{claim*}{Claim}
\newtheorem{conj}[thm]{Conjecture}
\newtheorem*{conj*}{Conjecture}
\theoremstyle{definition}
\newtheorem*{defn*}{Definition}
\newtheorem*{ques*}{Question}
\newtheorem*{exa*}{Example}
\newtheorem*{acknowledgements}{Acknowledgements}
\theoremstyle{remark}
\newtheorem*{rmk*}{Remark}
\numberwithin{figure}{section}
\numberwithin{table}{section}
\numberwithin{equation}{section}
\def \CC {\mathbb{C}}
\def \NN {\mathbb{N}}
\def \PP {\mathbb{P}}
\def \QQ {\mathbb{Q}}
\def \ZZ {\mathbb{Z}}
\def \Bcal {\mathcal{B}}
\def \Ecal {\mathcal{E}}
\def \Ocal {\mathcal{O}}
\def \hbar {\bar{h}}
\def \hbf {\mathbf{h}}
\title{A non-vanishing conjecture for cotangent bundles on elliptic surfaces}
\author{Haesong Seo}
\address{Department of Mathematical Sciences, KAIST, 291 Daehak-ro, Yuseong-gu, Deajeon, 34141, Republic of Korea
}
\email{hss21@kaist.ac.kr}
\subjclass[2020]{
    14E05, 
    14J27. 
}
\keywords{non-vanishing conjecture, symmetric differential, elliptic surface}
\begin{document}

\maketitle

\begin{abstract}
    In this paper, we prove the non-vanishing conjecture for cotangent bundles on isotrivial elliptic surfaces.
    Combined with the result by H\"{o}ring and Peternell, it completely solves the question for surfaces with Kodaira dimension at most $1$.
\end{abstract}


\section{Introduction} \label{sec:intro}

The non-vanishing conjecture is one of the main ingredients in the minimal model program, which is stated as follows: if $X$ is a smooth projective variety with $K_X$ pseudoeffective, then $H^0(X,mK_X) \neq 0$ for some $m > 0$.
H\"oring and Peternell \cite{HP2020} suggested its generalization to the cotangent bundles; recall that a vector bundle $\Ecal$ on $X$ is pseudoeffective (resp. big) if the tautological line bundle $\Ocal_{\PP(\Ecal)}(1)$ on the projectivization $\PP(\Ecal)$ is pseudoeffective (resp. big).

\begin{conj} \label{conj:non-vanishing-conjecture-for-cotangent-bundles}
    Let $X$ be a smooth projective variety, and let $1 \leq q \leq \dim X$.
    If $\Omega_X^{q}$ is pseudoeffective, then $H^0(X, S^m\Omega_X^q) \neq 0$ for some $m > 0$.
\end{conj}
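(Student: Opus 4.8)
The plan is to prove Conjecture~\ref{conj:non-vanishing-conjecture-for-cotangent-bundles} for surfaces $X$ carrying an isotrivial elliptic fibration; combined with \cite{HP2020} this settles Kodaira dimension at most $1$. Since $S^m\Omega_X^{2}=K_X^{\otimes m}$, the case $q=2$ is the classical non-vanishing statement for $K_X$, known for elliptic surfaces, so I treat only the cotangent bundle ($q=1$). As $h^0(X,S^m\Omega_X^1)$ is a birational invariant of smooth projective surfaces, I first pass to a relatively minimal model $f\colon X\to C$ of the fibration. If $q(X)=h^0(X,\Omega_X^1)>0$ we are done with $m=1$; so assume $q(X)=0$, which forces $C=\PP^1$ because $q(X)\ge g(C)$.

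Next I dispose of $\kappa(X)\le 0$ by showing the hypothesis is vacuous. If $\kappa(X)=-\infty$, then $X$ is rational (Castelnuovo), hence uniruled, so $K_X=\det\Omega_X^1$ meets a covering family of rational curves negatively and is not pseudoeffective; as a pseudoeffective vector bundle has pseudoeffective determinant, $\Omega_X^1$ is not pseudoeffective. If $\kappa(X)=0$, then $X$ is relatively minimal with $K_X$ nef, hence a K3 or Enriques surface; if $\Omega_X^1$ were pseudoeffective, passing to the K3 universal cover (in the Enriques case) would produce a pseudoeffective bundle with numerically trivial determinant on a K3, hence a numerically flat bundle, hence one with vanishing Chern classes, contradicting $c_2(\Omega_X^1)=24$.

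The essential case is $\kappa(X)=1$, $C=\PP^1$. Let $\mathcal{L}\subseteq\Omega_X^1$ be the saturation of the line subsheaf $f^*\Omega^1_{\PP^1}$; it is a line subbundle, so $\mathcal{L}^{\otimes m}=S^m\mathcal{L}$ embeds into $S^m\Omega_X^1$ and it suffices to prove $\kappa(X,\mathcal{L})\ge 0$. Writing $\mathcal{L}=f^*\Omega^1_{\PP^1}\otimes\mathcal{O}_X(D)$ with $D\ge 0$ vertical, the degree of the line bundle $f_*(\mathcal{L}^{\otimes m})$ on $\PP^1$ is linear in $m$ with leading coefficient $-2+\sum_v\beta_v$, where $\beta_v\in\QQ_{\ge 0}$ is a local invariant of the fibre $F_v$ recording the vanishing of $d(f^*t)$ along its components. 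Here isotriviality enters: all singular fibres then have potentially good reduction, so are of type $\mathrm{II},\mathrm{III},\mathrm{IV},\mathrm{I}_0^{*},\mathrm{IV}^{*},\mathrm{III}^{*},\mathrm{II}^{*}$ or multiples of smooth fibres, and a local analysis should give $\beta_v=\tfrac1{12}e(F_v)$ in the first cases and $\beta_v=1-\tfrac1{m_v}$ for an $m_v$-fold smooth fibre. Granting this, Noether's formula $12\chi(\mathcal{O}_X)=\sum_v e(F_v)$ together with Kodaira's canonical bundle formula $K_X=f^*(K_{\PP^1}\otimes L)\otimes\mathcal{O}_X(\sum_i(m_i-1)F_i)$, $\deg L=\chi(\mathcal{O}_X)$, yield
\[
-2+\sum_v\beta_v \;=\; \deg\Bigl(K_{\PP^1}+L+\sum_i\bigl(1-\tfrac1{m_i}\bigr)p_i\Bigr)\;>\;0,
\]
the strict inequality being precisely $\kappa(X)=1$. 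Hence $\kappa(X,\mathcal{L})=1$ and $H^0(X,S^m\Omega_X^1)\supseteq H^0(X,\mathcal{L}^{\otimes m})\ne 0$ for suitable $m$. (Equivalently, realising $X$ as a resolution of $(E\times C')/G$ for a Galois cover $C'\to\PP^1$ with group $G$ acting on a fixed elliptic curve $E$, these sections correspond to $G$-invariant pluricanonical forms on $C'$, which exist because $g(C')\ge 2$ when $\kappa(X)=1$.)

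I expect the fibrewise computation of the $\beta_v$---pinning down the saturation of $f^*\Omega_C^1$ inside $\Omega_X^1$ at each Kodaira fibre occurring on an isotrivial surface, with careful bookkeeping at the multiple fibres---to be the main obstacle. It is also the step where isotriviality is indispensable: a multiplicative fibre $\mathrm{I}_n$ ($n\ge 1$) has $\beta_v=0$ while $e(\mathrm{I}_n)=n$, so for non-isotrivial fibrations the saturation no longer absorbs $\chi(\mathcal{O}_X)$ and one needs the separate argument of \cite{HP2020}. A lesser point is to make precise the numerical-flatness input ruling out pseudoeffectivity of $\Omega_X^1$ on K3 and Enriques surfaces.
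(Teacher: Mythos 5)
The decisive step of your plan---the fibrewise computation giving $\beta_v=\tfrac{1}{12}e(F_v)$ for the additive fibres of an isotrivial fibration---is not merely ``the main obstacle'': it is false, and with the correct values the argument collapses. Writing $\Lcal=f^*\Omega^1_{B}\otimes\Ocal_X(D)$ with $D=\sum_v(F_v-F_{v,\mathrm{red}})$ (this is indeed the saturation), one has $f_*\Lcal^{\otimes m}=K_{B}^{\otimes m}\otimes f_*\Ocal_X(mD)$, and over a disc around $v$ a section of $\Ocal_X(m(F_v-F_{v,\mathrm{red}}))$ is a pullback $f^*h$ whose pole order $k$ at $v$ must satisfy $kF_v\le m(F_v-F_{v,\mathrm{red}})$ componentwise. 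Every non-multiple Kodaira fibre has a component of multiplicity one, on which $F_v-F_{v,\mathrm{red}}$ has coefficient zero, so $k\le 0$: every additive fibre contributes $\beta_v=0$ (not $\tfrac1{12}e(F_v)$), while an $m_v$-fold smooth fibre contributes $1-\tfrac1{m_v}$. Hence the slope of $\deg f_*\Lcal^{\otimes m}$ is the orbifold degree $2g(B)-2+\sum_i(1-\tfrac1{\nu_i})$, not $\deg(K_{B}+L+\Delta)$; the $\chi(\Ocal_X)$ term is exactly what the saturation fails to absorb, for additive just as for multiplicative fibres, so Noether's formula never enters.

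This gap cannot be repaired, because the unconditional statement you aim at in the case $\kappa=1$, $q=0$ is false. Let $C$ be hyperelliptic of genus $g\ge 2$, $E$ elliptic, and $X$ the minimal resolution of $(C\times E)/(\ZZ/2\ZZ)$ (hyperelliptic involution times $-1$): this is a relatively minimal isotrivial elliptic surface over $\PP^1$ with $2g+2$ fibres of type $I_0^{\ast}$, no multiple fibres, $q(X)=0$, $\chi(\Ocal_X)=g+1$, hence $\kappa(X)=1$; yet $f^*\Omega^1_{\PP^1}(D)$ has negative orbifold degree, so by Theorem~\ref{thm:main-theorem} (already by \cite[Theorem~6.7]{HP2020} in this case, since no fibres of type $II$, $III$, $IV$ occur) $\Omega^1_X$ is not pseudoeffective, and in particular $H^0(X,S^m\Omega^1_X)=0$ for all $m>0$. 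The same phenomenon defeats your closing parenthesis: $G$-invariant forms on $C\times E$ (already $ds^2$ near an $A_1$ fixed point) generally do \emph{not} extend across the exceptional curves of the resolution, and controlling exactly this extension problem---the logarithmic/valuation analysis of Section~\ref{sec:the-non-vanishing-conjecture}---is the real content of the paper. Your proposal never uses the pseudoeffectivity hypothesis in the essential case, whereas the conjecture holds there precisely because that hypothesis fails; what must be proved is that pseudoeffectivity of $\Omega^1_X$ forces $2g(B)-2+\sum_i(1-\tfrac1{\nu_i})\ge 0$, i.e.\ Theorem~\ref{thm:main-theorem}. (Two smaller points: a pseudoeffective bundle need not have pseudoeffective determinant, e.g.\ $\Ocal_{\PP^1}(1)\oplus\Ocal_{\PP^1}(-5)$, and ``pseudoeffective with $c_1\equiv 0$ implies numerically flat'' also fails for this weak notion, e.g.\ $\Ocal_{\PP^1}(1)\oplus\Ocal_{\PP^1}(-1)$; the cases $\kappa\le 0$ should simply be quoted from \cite{HP2020}.)
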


The corresponding version for tangent bundles on surfaces has been widely studied.
For example, H\"oring, Liu and Shao \cite{HLS2022} proved that for a smooth del Pezzo surface $S$, the tangent bundle $T_S$ is pseudoeffective (resp. big) if and only if the degree $d = K_S^2$ is at least $4$ (resp. at least $5$).
Also, H\"{o}ring and Peternell \cite{HP2021} provided a splitting structure of the tangent bundle of a smooth non-uniruled projective surface $S$ when $T_S$ is pseudoeffective.
Jia, Lee and Zhong \cite{JLZ2023} showed that if $S$ is a smooth non-uniruled projective surface, then $T_S$ is pseudoeffective if and only if $S$ is minimal and $c_2(S) = 0$.
In particular, the non-vanishing conjecture for tangent bundles holds for del Pezzo surfaces (cf. \cite[Theorem~1.2]{HLS2022}) and for non-uniruled surfaces (cf. \cite[Corollary~1.3]{JLZ2023}).
In addition, the tangent bundle on a surface of general type is not pseudoeffective by \cite[Proposition~3.2]{JLZ2023}.

On the other hand, not much is known for cotangent bundles.
H\"oring and Peternell \cite{HP2020} showed that Conjecture~\ref{conj:non-vanishing-conjecture-for-cotangent-bundles} partially holds for a smooth projective surface $S$ with $\kappa(S) \leq 1$, but the result remains open when $S$ admits an isotrivial elliptic fibration.
Cao and H\"oring \cite{CH2023} proved that if a smooth projective variety $X$ admits an abelian fibration $f:X \rightarrow C$ onto a curve, then Conjecture~\ref{conj:non-vanishing-conjecture-for-cotangent-bundles} holds if $f_{\ast}\Omega_X^1$ has rank $1$, which corresponds to the non-isotrivial one for elliptic surfaces.
For a surface $S$ of general type, the Bogomolov vanishing theorem tells us that $\Omega_S^1$ is big if $c_1^2 - c_2 > 0$, but little is known for the case $c_1^2 - c_2 \leq 0$.

In this paper, we prove Conjecture~\ref{conj:non-vanishing-conjecture-for-cotangent-bundles} for the remaining case of isotrivial elliptic surfaces.
Together with \cite[Proposition~5.4]{HP2020}, it fully answers the question for surfaces $S$ with $\kappa(S) \leq 1$.

\begin{thm}
    Let $f:S \rightarrow B$ be a relatively minimal isotrivial elliptic surface.
    If $\Omega_S^1$ is pseudoeffective, then $H^0(S, S^m \Omega_S^1) \neq 0$ for some $m > 0$.
\end{thm}

Our main theorem can be proven by applying \cite[Proposition~5.2~and~4.6]{HP2020} to the following technical lemma:

\begin{thm}[cf. {\cite[Theorem~6.7]{HP2020}}] \label{thm:main-theorem}
    Let $f:S \rightarrow B$ be a relatively minimal isotrivial elliptic surface.
    Let $D = \sum_{b \in B} [f^{-1}(b) - f^{-1}(b)_{\mathrm{red}}]$, where $f^{-1}(b)_{\mathrm{red}}$ is the reduction of the fibre $f^{-1}(b)$.
    If $\Omega_S^1$ is pseudoeffective, so is $f^{\ast}\Omega_B^1(D)$.
\end{thm}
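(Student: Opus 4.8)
The plan is to use the structure theory of isotrivial elliptic surfaces to reduce to an essentially trivial statement on a smooth model over a finite cover of $B$, and then to transfer pseudoeffectivity back and forth. First I would reformulate the conclusion. The map $f^{\ast}\Omega_B^1\hookrightarrow\Omega_S^1$ is injective (it is a line subsheaf of a reflexive sheaf), and its saturation in $\Omega_S^1$ is exactly $f^{\ast}\Omega_B^1(D)$: locally near a component $C$ of multiplicity $a$ in a fibre one has $t=u^{a}\cdot(\mathrm{unit})$ for a coordinate $t$ on $B$, so $f^{\ast}(dt)$ vanishes to order $a-1$ along $C$, which is precisely the coefficient of $C$ in $D$ (this also uses that over a reduced, possibly singular, fibre $f^{\ast}\Omega^1_B$ is already saturated). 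Thus $f^{\ast}\Omega_B^1(D)$ is a saturated line subbundle of $\Omega_S^1$, and the claim becomes: if $\Omega_S^1$ is pseudoeffective then the divisor class $f^{\ast}K_B+D$ is pseudoeffective.

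Next I would use isotriviality. Since the $j$-invariant is constant, $f$ has potential good reduction everywhere, so there is a finite Galois cover $\beta\colon B'\to B$, ramified exactly over the critical values of $f$ to the appropriate orders $n_b\in\{m,2,3,4,6\}$, together with a relatively minimal elliptic surface $f'\colon S'\to B'$ obtained from $S\times_B B'$ such that $S'\to B'$ is \emph{smooth} — an isotrivial elliptic fibre bundle, and after a further finite étale base change even a translation torsor with $\Omega^1_{S'/B'}\cong\Ocal_{S'}$. Normalising $S\times_B B'$ produces a smooth surface $\Shat$ carrying a generically finite surjective morphism $p\colon\Shat\to S$ and a birational morphism $q\colon\Shat\to S'$ (contracting the $(-1)$-curves that appear over the additive fibres), compatibly with $\fhat=f'\circ q\colon\Shat\to B'$ and $\beta$. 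On $S'$ the assertion is empty: $D'=0$, so $f'^{\ast}\Omega^1_{B'}=f'^{\ast}K_{B'}$ is pseudoeffective iff $g(B')\ge1$; and $\Omega^1_{S'}$, being an extension of $\Ocal_{S'}$ by $f'^{\ast}K_{B'}$ over a fibre bundle, is pseudoeffective iff $g(B')\ge1$ as well.

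For the forward transfer I would argue by uniruledness: a smooth projective surface with pseudoeffective cotangent bundle is not uniruled (restrict $\Ocal_{\PP(\Omega^1_S)}(1)$ to $\PP(\Omega^1_S|_C)$ for a very free rational curve $C$ to contradict pseudoeffectivity). If $g(B')=0$ then $\kappa(S')=-\infty$ by the canonical bundle formula, so $S'$ — and hence $\Shat$ (a blow-up of $S'$) and hence $S$ (dominated by $\Shat$) — would be uniruled; therefore $\Omega_S^1$ pseudoeffective forces $g(B')\ge1$, i.e.\ $K_{B'}$ pseudoeffective. For the backward transfer, since $p$ is generically finite and surjective one has $p_{\ast}p^{\ast}=\deg(p)\cdot\mathrm{id}$ and $p_{\ast}$ preserves pseudoeffectivity, so it suffices to prove that $p^{\ast}(f^{\ast}K_B+D)$ is pseudoeffective on $\Shat$. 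Riemann–Hurwitz gives $p^{\ast}f^{\ast}K_B=\fhat^{\ast}(K_{B'}-R)$ with $R$ the ramification divisor of $\beta$, so $p^{\ast}(f^{\ast}K_B+D)=\fhat^{\ast}K_{B'}+(p^{\ast}D-\fhat^{\ast}R)$, where $\fhat^{\ast}K_{B'}=q^{\ast}f'^{\ast}K_{B'}$ is pseudoeffective. A fibrewise computation shows $p^{\ast}D-\fhat^{\ast}R=0$ over the multiple fibres, while over each additive fibre it picks up a negative contribution supported on the $q$-exceptional curves $\Xi_{b'}$; one rewrites the whole class, using the numerical relations $\fhat^{\ast}K_{B'}\equiv(2g(B')-2)F_{\fhat}$ and $F_{\fhat}\equiv\widetilde E_{b'}+\Xi_{b'}$ together with the orbifold Riemann–Hurwitz identity $2g(B')-2=\deg(\beta)\cdot\deg K_{\mathrm{orb}}$, as $(2g(B')-2-\#\{\text{additive }b'\})F_{\fhat}+\sum_{b'}\widetilde E_{b'}$ up to effective fibral terms.

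Consequently the \textbf{main obstacle} is the backward transfer at the additive fibres: one must show the above class is pseudoeffective, i.e.\ that the hypothesis ``$\Omega^1_S$ pseudoeffective'' already forces the inequality $\deg K_{\mathrm{orb}}\ge\sum_{b\ \text{additive}}1/n_b$ (or else that the remaining negative fibral part is absorbed). I expect this to come from a sharpened version of the forward step: when that inequality fails, $f^{\ast}K_B+D$ is a vertical class which, by Zariski's lemma (a nef vertical class on an elliptic surface is a nonnegative multiple of the fibre), cannot be written as $cF+N$ with $c\ge0$ and $N$ effective vertical, hence is not pseudoeffective — contradicting what we are trying to prove only if $\Omega^1_S$ is pseudoeffective, so the required inequality must hold in that case. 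Granting this, $p^{\ast}(f^{\ast}K_B+D)$ is pseudoeffective on $\Shat$, and pushing forward along $p$ gives the pseudoeffectivity of $f^{\ast}K_B+D=f^{\ast}\Omega_B^1(D)$ on $S$; the remaining work is the explicit case analysis of the configurations $I_0^{*},II,III,IV,IV^{*},III^{*},II^{*}$ and the bookkeeping of the curves $\widetilde E_{b'}$ and $\Xi_{b'}$ on $\Shat$.
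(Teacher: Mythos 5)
Your reduction collapses at exactly the point you flag as ``the main obstacle,'' and the fix you propose there is circular. After the backward transfer you need the class $\fhat^{\ast}K_{B'}+(p^{\ast}D-\fhat^{\ast}R)$ to be pseudoeffective, i.e.\ you need the orbifold-degree inequality $2g(B)-2+\sum_i(1-1/\nu_i)\geq 0$ to follow from the hypothesis that $\Omega_S^1$ is pseudoeffective. But that implication \emph{is} the theorem (modulo discarding the negative definite part $D_0$ by Zariski's lemma): $f^{\ast}\Omega_B^1(D)$ is numerically $\bigl(2g(B)-2+\sum_i(1-1/\nu_i)\bigr)$ times a fibre, so it is pseudoeffective precisely when that inequality holds. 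Your proposed mechanism --- ``when the inequality fails, $f^{\ast}K_B+D$ is not pseudoeffective, contradicting what we are trying to prove'' --- assumes the conclusion; nothing in your argument converts pseudoeffectivity of the rank-two bundle $\Omega_S^1$ into pseudoeffectivity of this particular line subbundle, and that conversion is the whole content of the statement. Your only genuine use of the hypothesis is the uniruledness argument, which gives $g(C)=g(B')\geq 1$, and this is strictly weaker than what is needed: for the Kummer-type example ($B\simeq\PP^1$, four fibres of type $I_0^{\ast}$, no multiple fibres, $C$ of genus $1$, $S$ a K3) one has $g(B')\geq 1$ and $S$ not uniruled, yet the orbifold degree is $-2$ and the negative contribution $p^{\ast}D-\fhat^{\ast}R$ over the additive fibres is exactly the term your argument cannot absorb. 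So the proposal reduces the theorem to itself at the additive fibres and provides no tool to close that gap.

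What is actually required --- and what the paper does --- is a quantitative statement about twisted symmetric differentials: arguing by contraposition, one assumes $B\simeq\PP^1$ and $\sum_i(1-1/\nu_i)<2$, pulls any section of $S^m\Omega_{S'}^1\otimes\Ocal_{S'}(jA_{S'})$ back to $C\times E$, and shows via an explicit analysis of logarithmic symmetric differentials on the Hirzebruch--Jung resolutions (Lemmas~\ref{lem:log-symmetric-differential-on-III}--\ref{lem:vanishing-order-of-symmetric-differential-for-star}, Corollary~\ref{cor:vanishing-order-of-symmetric-differential}) that it must vanish to order at least $(e_{\ast}-1)m-e_{\ast}Nj$ at a fixed point over each additive fibre. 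Combining this with the Riemann--Hurwitz count of Lemma~\ref{lem:number-of-ramification-points} (which uses precisely the assumed failure of the orbifold inequality) and the claim from the proof of \cite[Theorem~6.7]{HP2020}, all such sections vanish, so $\Omega_{S'}^1$ is not pseudoeffective, and by \cite[Proposition~4.1]{HP2020} neither is $\Omega_S^1$. Some ingredients of your setup (the covering $C\times E$, the non-relatively-minimal model and its $(-1)$-curves, the birational invariance of pseudoeffectivity of $\Omega^1$) do appear in the paper, but the analytic core --- extracting the degree inequality from the existence of twisted symmetric differentials --- is entirely missing from your proposal.
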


For a given relatively minimal elliptic surface $f:S \rightarrow B$, let $\lambda^f(B)$ be the Iitaka dimension of the $\QQ$-line bundle $K_B + \sum_{i=1}^s (1-\frac{1}{\nu_i})a_i$, where $f$ has multiple fibres over $a_i \in B$ with multiplicity $\nu_i$.
Notably, a (relatively minimal) isotrivial elliptic surface $f:S \rightarrow B$ with $\lambda^f(B) = 0$ and that is not almost smooth has only one nontrivial symmetric differential (cf. \cite[Section~4, Table III and (H)]{Sakai1979}).

In view of \cite[Lemma~A.1]{HP2020}, one can characterize the pseudoeffectivity of the cotangent bundle on an elliptic surface:

\begin{cor}
    Let $f:S \rightarrow B$ be a smooth elliptic surface.
    Then $\Omega_S^1$ is pseudoeffective if and only if the fundamental group $\pi_1(S)$ is infinite.
\end{cor}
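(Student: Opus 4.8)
The plan is to prove the equivalent statement that $\Omega_S^1$ is pseudoeffective if and only if $\pi_1(S)$ is infinite, using Theorem~\ref{thm:main-theorem} in the isotrivial case (the non-isotrivial case being covered by \cite{HP2020, CH2023}) and \cite[Lemma~A.1]{HP2020} to pass from the numerics to $\pi_1$. First I would dispose of two easy regimes. If $g(B) \ge 1$, then pullback of forms gives $h^0(S, \Omega_S^1) \ge h^0(B, \Omega_B^1) = g(B) \ge 1$, so $\Omega_S^1$ has a nonzero section and is pseudoeffective, while $\pi_1(S) \twoheadrightarrow \pi_1(B)$ is infinite; both sides hold and the equivalence is vacuous. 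So assume $B = \PP^1$. If moreover $\kappa(S) = -\infty$, then $S$ is uniruled and, since a smooth elliptic surface has $\chi(\Ocal_S) = 0$, is a blow-up of a surface ruled over an elliptic curve $C$; then $h^0(S, \Omega_S^1) = g(C) = 1$, so $\Omega_S^1$ is again pseudoeffective, and $\pi_1(S) \cong \pi_1(C) \cong \ZZ^2$ is infinite. It remains to treat $B = \PP^1$ with $\kappa(S) \ge 0$, where Theorem~\ref{thm:main-theorem} applies.

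In this remaining case the crucial input is a converse to Theorem~\ref{thm:main-theorem} at the level of line bundles. The natural morphism $f^{\ast}\Omega_B^1 \to \Omega_S^1$ is injective, and its saturation inside $\Omega_S^1$ is exactly $f^{\ast}\Omega_B^1(D)$: writing $D = \sum_i (m_i - 1) E_{0,i}$, where $m_i E_{0,i} = f^{-1}(b_i)$ runs over the non-reduced fibres, a local equation $t = v^{m_i}$ for $f$ near a general point of $E_{0,i}$ gives $df = m_i v^{m_i - 1}\, dv$, so the image of $f^{\ast}\Omega_B^1$ vanishes to order $m_i - 1$ along $E_{0,i}$ and its saturation picks up precisely $D$. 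Hence $f^{\ast}\Omega_B^1(D)$ is a sub-line-bundle of $\Omega_S^1$; since $\bigl(f^{\ast}\Omega_B^1(D)\bigr)^{\otimes m}$ injects into $S^m\Omega_S^1$ and the pseudoeffective cone is closed, pseudoeffectivity of $f^{\ast}\Omega_B^1(D)$ forces $\Ocal_{\PP(\Omega_S^1)}(1)$ to be pseudoeffective. Together with Theorem~\ref{thm:main-theorem} this yields: $\Omega_S^1$ is pseudoeffective if and only if $f^{\ast}\Omega_B^1(D)$ is.

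It remains to read off $\pi_1(S)$. Using $m_i E_{0,i} \equiv F$ in $N^1(S)$ for a general fibre $F$, one computes $f^{\ast}\Omega_B^1(D) \equiv \bigl(-2 + \sum_i (1 - 1/m_i)\bigr) F$ in $N^1(S)_{\RR}$; since $F$ is a nonzero nef class with $F^2 = 0$, a multiple $cF$ is pseudoeffective precisely when $c \ge 0$. So $f^{\ast}\Omega_B^1(D)$ is pseudoeffective exactly when $\sum_i (1 - 1/m_i) \ge 2$, i.e. when the orbifold $\bigl(\PP^1;\, m_1, \dots, m_k\bigr)$ is not spherical. Finally, \cite[Lemma~A.1]{HP2020}, which describes the fundamental group of an elliptic surface via the orbifold fundamental group of its base, gives that $\pi_1(S)$ is finite exactly when this orbifold is spherical. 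Concatenating, $\Omega_S^1$ fails to be pseudoeffective if and only if $\sum_i (1 - 1/m_i) < 2$, if and only if $\pi_1(S)$ is finite.

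The step I expect to be the main obstacle is the converse inclusion of the second paragraph — more precisely the assertion that it, together with Theorem~\ref{thm:main-theorem}, accounts for \emph{all} the pseudoeffectivity of $\Omega_S^1$: one must exclude any competing sub-line-bundle of $\Omega_S^1$, for instance a trivial summand of the type $\pr_E^{\ast}\Omega_E^1 = \Ocal_S$ that occurs on a product $E \times \PP^1$, and it is precisely for this that the uniruled surfaces are set aside at the start and Theorem~\ref{thm:main-theorem}, rather than a direct positivity construction, is invoked for the rest. A secondary subtlety is the exact shape of \cite[Lemma~A.1]{HP2020} when $\chi(\Ocal_S) = 0$, where the generic-fibre lattice $\ZZ^2$ can survive in $\pi_1(S)$: one must verify this does not upset the dichotomy ``$\pi_1(S)$ finite $\iff$ the orbifold base is spherical'', which it does not once the genuinely uniruled surfaces have been removed.
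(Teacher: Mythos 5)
Your disposal of the regime $B \simeq \PP^1$, $\kappa(S) = -\infty$ rests on a false claim: a smooth elliptic surface need not have $\chi(\Ocal_S) = 0$, and need not be a blow-up of a surface ruled over an elliptic curve. Rational elliptic surfaces (the blow-up of $\PP^2$ in the nine base points of a cubic pencil, or isotrivial ones such as $y^2 = x^3 + t$) are elliptic over $\PP^1$ with $\kappa = -\infty$, $\chi(\Ocal_S) = 1$, $q = 0$ and $\pi_1(S) = 1$; the vanishing $\chi(\Ocal_S)=0$ holds only for quasi-bundles (almost smooth fibrations), not for arbitrary elliptic fibrations. For these surfaces your conclusions $h^0(S,\Omega_S^1) = 1$ and $\pi_1(S) \simeq \ZZ^2$ are wrong, and the assertion you actually need for them --- that $\Omega_S^1$ is \emph{not} pseudoeffective because $\pi_1$ is finite --- is never established, since your main case excludes them by the hypothesis $\kappa(S) \geq 0$. (They should simply be fed into the Theorem~\ref{thm:main-theorem} / \cite[Proposition~5.4]{HP2020} machinery like everything else; the case split by Kodaira dimension is what creates the hole.) Relatedly, you never reduce to a relatively minimal fibration, which Theorem~\ref{thm:main-theorem} requires; the paper does this first, using \cite[Proposition~4.1]{HP2020}.

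Two further points. The identity $f^{\ast}\Omega_B^1(D) \equiv (-2+\sum_i(1-1/m_i))F$ in $N^1(S)_{\RR}$ is false whenever $f$ has non-multiple non-reduced fibres (types $I_b^{\ast}$, $II$, $III$, $IV$ and their duals) --- which is exactly the isotrivial situation at stake: for such a fibre, $f^{-1}(b)-f^{-1}(b)_{\mathrm{red}}$ has negative self-intersection and is not proportional to $F$. The correct route is the one taken at the start of Section~\ref{sec:the-non-vanishing-conjecture}: by Zariski's lemma the part $D_0$ is negative definite, so by \cite[Lemma~14.10]{Badescu2001} pseudoeffectivity of $f^{\ast}\Omega_B^1(D)$ is equivalent to that of $f^{\ast}\Omega_B^1(\sum_i(\nu_i-1)F_i)$, and only then does your coefficient computation apply. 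Finally, \cite[Lemma~A.1]{HP2020} is not a description of $\pi_1(S)$ via the orbifold base; it is the implication ``$\pi_1$ infinite $\Rightarrow$ $\Omega^1$ pseudoeffective'', which is how the paper settles that direction in one line. Your saturation argument (a pseudoeffective sub-line-bundle $f^{\ast}\Omega_B^1(D) \subset \Omega_S^1$ forces $\Omega_S^1$ pseudoeffective) is a legitimate alternative for that direction, but your dictionary ``$\pi_1(S)$ finite $\Leftrightarrow$ spherical orbifold base'' then needs a separate classical input, with the almost-smooth caveat handled, and cannot be attributed to Lemma~A.1. For the converse direction the paper bypasses this dictionary entirely: from ``$g(B)\geq 1$ or at least three multiple fibres'' it produces, via \cite[Theorem~IV.9.12]{FK1992}, a ramified cover $B' \rightarrow B$ inducing a finite \'etale cover $S' \rightarrow S$ with $g(B') \geq 1$, whence $\pi_1(S)$ is infinite.
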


The `if' direction is due to \cite[Lemma~A.1]{HP2020}.
For the converse, suppose that $\Omega_S^1$ is pseudoeffective.
We may assume that $f$ is relatively minimal and not almost smooth.
Theorem~\ref{thm:main-theorem}, as well as \cite[Proposition~5.4]{HP2020}, states that either $g(B) \geq 1$ or $f$ has at least three multiple fibres.
Thus, by \cite[Theorem~IV.9.12]{FK1992}, there is a ramified covering $B' \rightarrow B$ that induces a finite \'etale cover $S' \rightarrow S$ such that the elliptic fibration $S' \rightarrow B'$ has no orbifold divisor.
Now $\pi_1(S') \simeq \pi_1(B')$ is infinite as $g(B') \geq 1$.

The proof of Theorem~\ref{thm:main-theorem} basically uses the same idea as that of \cite[Theorem~6.7]{HP2020}.
However, to deal with singular fibres of types $II$, $III$ and $IV$, we need to work with a birational model $S'$ of $S$ which might not be relatively minimal.
Using an explicit computation, we analyze the local obstruction for symmetric differentials on $S'$ in terms of types of singular fibres.
Then we use the fact from \cite[Proposition~4.1]{HP2020} that the pseudoeffectivity of $\Omega_S^1$ is equivalent to that of $\Omega_{S'}^1$, which completes the proof.

\begin{acknowledgements}
This research is supported by the Institute for Basic Science (IBS-R032-D1).
The author would like to express the gratitude to Prof. Yongnam Lee for his suggestions on research topics and valuable comments, as well as to Prof. Andreas H\"oring for his nice lectures during his visit to IBS-CCG in November 2023.
The author also appreciates Dr. Guolei Zhong for his careful comments and for pointing out missing details in the previous version of this paper.
\end{acknowledgements}


\section{Elliptic surfaces} \label{sec:elliptic-surfaces}

We will work over $\CC$ and follow \cite{Hartshorne1977} for basic notations.
All the varieties are assumed to be reduced and irreducible.

An \textit{elliptic surface} (or an \textit{elliptic fibration}) is a fibration $f:S \rightarrow B$ from a surface to a curve whose general fibre is an elliptic curve.
We typically define elliptic surfaces to be \textit{relatively minimal}, meaning that there are no $(-1)$-curves in the fibres.
There is a classification due to Kodaira \cite{Kodaira1963} for singular fibres: $_mI_b$, $I_b^{\ast}$, $II$, $II^{\ast}$, $III$, $III^{\ast}$, $IV$ and $IV^{\ast}$ for $m \geq 1$ and $b \geq 0$.
Note that the only multiple singular fibres are of type $_mI_b$ for $m \geq 2$.

If a (relatively minimal) elliptic surface is isotrivial, i.e., general fibres are mutually isomorphic, then there are no singular fibres whose monodromy group is infinite.

\begin{lem}[{\cite[Lemma~3.2]{PS2020}}] \label{lem:singular-fibres-in-isotrivial}
    For a relatively minimal isotrivial elliptic fibration, the multiple singular fibres are of type $_{m}I_0$ for $m \geq 2$; the non-multiple singular fibres are of type $I_0^{\ast}$, $II$, $II^{\ast}$, $III$, $III^{\ast}$, $IV$ or $IV^{\ast}$.
\end{lem}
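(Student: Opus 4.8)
\emph{Proof strategy.} The plan is to reduce the assertion to the finiteness of the monodromy of the fibration, and then to read off the excluded fibre types from Kodaira's table of local monodromies.

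First I would pass to the associated relatively minimal Jacobian elliptic surface $\bar f\colon\bar S\to B$ of $f$, that is, the elliptic surface with a section having the same $j$-invariant map $j_f\colon B\to\PP^1$ and the same homological invariant as $f$. It is classical (due to Kodaira; see also the books of Cossec--Dolgachev or Miranda) that $\bar f$ has no multiple fibres, that over a point carrying a non-multiple singular fibre of $f$ of Kodaira type $T$ the fibre of $\bar f$ again has type $T$, and that over a point carrying a multiple fibre $_mI_b$ of $f$ the fibre of $\bar f$ has type $I_b$ (which is smooth when $b=0$). Since $j_{\bar f}=j_f$ is constant when $f$ is isotrivial, $\bar f$ is again isotrivial. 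It therefore suffices to prove that \emph{a relatively minimal Jacobian isotrivial elliptic surface has no singular fibre of type $I_b$ or $I_b^{\ast}$ with $b\ge1$}: granting this, a multiple fibre $_mI_b$ of $f$ would force an $I_b$ fibre on $\bar S$ and hence $b=0$, so the multiple fibres of $f$ are of type $_mI_0$; and a non-multiple singular fibre of $f$ of type $I_b$ or $I_b^{\ast}$ with $b\ge1$ would produce a fibre of the same type on $\bar S$.

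For the reduced statement, let $B^{\circ}\subseteq B$ be the complement of the finite set of points over which $\bar f$ is not smooth, and let $\bar\rho\colon\pi_1(B^{\circ})\to\SL_2(\ZZ)$ be the monodromy representation of $\bar f|_{B^{\circ}}$ on the $H_1$ of a general fibre. Because all fibres of $\bar f$ over $B^{\circ}$ are isomorphic to a single elliptic curve $E$, the holomorphic period map $\widetilde{B^{\circ}}\to\mathfrak h$ to the upper half-plane has constant composition $j(E)$ with the modular function $j\colon\mathfrak h\to\CC$; hence it takes values in the discrete $\SL_2(\ZZ)$-orbit $j^{-1}(j(E))$ and is constant by connectedness of $\widetilde{B^{\circ}}$. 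Equivariance of the period map then forces $\bar\rho(\pi_1(B^{\circ}))$ into the stabiliser in $\SL_2(\ZZ)$ of a point of $\mathfrak h$, which is finite. (Equivalently, an isotrivial smooth family of elliptic curves is trivialised by a finite \'etale base change and so has finite monodromy.) Now Kodaira's table gives local monodromy conjugate to $\abcd{1}{b}{0}{1}$ for a fibre of type $I_b$ and to $-\abcd{1}{b}{0}{1}$ for one of type $I_b^{\ast}$, both of which have infinite order as soon as $b\ge1$; since $\bar\rho$ has finite image no such fibre can occur on $\bar S$. This proves the reduced statement, and with it the lemma. (The remaining singular types $I_0^{\ast},II,II^{\ast},III,III^{\ast},IV,IV^{\ast}$ have finite local monodromy and are indeed not excluded.)

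The step I expect to be the main obstacle is the bookkeeping around multiple fibres in the first paragraph --- being certain that passing to the Jacobian replaces $_mI_b$ by $I_b$, removes every multiplicity, and preserves both the $j$-invariant map and the types of the non-multiple fibres. If a more self-contained argument is wanted, one can instead realise the isotrivial surface as a relatively minimal model of a quotient $(E\times B')/G$, with $B'\to B$ a finite Galois cover and $G$ acting diagonally through $\Aut(E)=E\rtimes\mu_k$ with $k\in\{2,4,6\}$, and then compute the Kodaira type over each branch point directly from the local action of the cyclic stabiliser: translations yield $_mI_0$, while automorphisms of order $2$, $3$, $4$, $6$ yield $I_0^{\ast}$, $IV$ or $IV^{\ast}$, $III$ or $III^{\ast}$, and $II$ or $II^{\ast}$ respectively.
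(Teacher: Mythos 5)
Your argument is correct, but it follows a different route from the paper, which does not prove this lemma at all: it simply cites Prokhorov--Shramov \cite[Lemma~3.2]{PS2020} and then, in Section~\ref{sec:elliptic-surfaces}, works with Serrano's description of an isotrivial surface as (a resolution of) $(C\times E)/G$. Your main proof goes through the relatively minimal Jacobian fibration: you use that it has the same functional and homological invariants, no multiple fibres, that $_mI_b$ is replaced by $I_b$ and other types are preserved, and then you kill $I_b$ and $I_b^{\ast}$ ($b\ge 1$) by the finiteness of the monodromy of an isotrivial smooth family, since the local monodromies $\abcd{1}{b}{0}{1}$ and $-\abcd{1}{b}{0}{1}$ have infinite order for $b\ge1$ while the image of $\pi_1(B^{\circ})$ lies in the finite stabiliser of a point of the upper half-plane. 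All the ingredients are standard and the deduction is complete; the only external inputs are the properties of the Jacobian fibration, which are classical (Kodaira). Note that there is an even shorter variant of the same idea: the fibre types $I_b$, $I_b^{\ast}$ and $_mI_b$ with $b\ge1$ are exactly those occurring at poles of the functional invariant $j$, and for an isotrivial fibration $j$ is constant and finite, so these types cannot occur --- this avoids the bookkeeping about the Jacobian that you flag as the delicate step. Your alternative self-contained sketch (computing the fibre types of the relatively minimal model of $(E\times B')/G$ from the local action of the cyclic stabilisers: translations give $_mI_0$, automorphisms of order $2,3,4,6$ give $I_0^{\ast}$, $IV/IV^{\ast}$, $III/III^{\ast}$, $II/II^{\ast}$) is precisely the viewpoint the paper itself adopts after stating the lemma, so that route would align most closely with the surrounding text; the monodromy/Jacobian route buys a proof that does not rely on Serrano's structure theorem.
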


If $f:S \rightarrow B$ is isotrivial, by \cite[Section~2]{Serrano1996}, there exist a smooth curve $C$ and a finite group $G$ such that the following diagram commutes:
 \begin{equation} \label{eqn:isotrivial-elliptic-fibration}
  \begin{tikzcd}
      S' \arrow[rd, "\lambda"] \arrow[d, "\mu"'] & C \times E \arrow[d, "q"] \\
      S \arrow[d, "f"'] & (C \times E)/G \arrow[d] \\
      B \arrow[r, equal] & C/G
  \end{tikzcd}
 \end{equation}
where $E$ is the generic fibre, $G$ acts on $C \times E$ diagonally and $\lambda$ is the minimal resolution of $(C \times E)/G$.
Since $f' = f \circ \mu:S' \rightarrow B$ is not relatively minimal in general, the map $\mu$ might not be an isomorphism.

Note that $G_x$ is cyclic for each $x \in C$ (cf. \cite[p.106,~Corollary]{FK1992}).
Since $E$ is an elliptic curve, $G_x$ acts on $E$ by translation or it fixes a point $e \in E$.
In the latter case, regarding $e$ as an identity element of $E$, one can see that $G_x$ is isomorphic to $\ZZ/2\ZZ$, $\ZZ/3\ZZ$, $\ZZ/4\ZZ$ or $\ZZ/6\ZZ$ by \cite[Corollary~4.7]{Hartshorne1977}.

Choose a small disc $x \in \Delta \subset C$ such that $(C \times E)/G$ is locally isomorphic to $(\Delta \times E)/G_x$.
Then the minimal resolution $S' \rightarrow (C \times E)/G$ is locally isomorphic to that of $(\Delta \times E)/G_x$.
If $G_x$ acts on $E$ by translation, the $f'$-fibre under $x$ is the multiple elliptic $_mI_0$ where $m = |G_x|$, so there is no $(-1)$-curve in the fibre.
On the other hand, if $G_x$ fixes a point $e$ of $E$, then the exceptional divisor of the minimal resolution forms a Hirzebruch-Jung string by \cite[Theorem~III.5.4]{BHPVdV2004}.
Indeed, for each nonzero element $g \in G_x$ and its fixed point $e' \in E$, there is a local coordinate $(s,c)$ at $(x,e')$ such that $g$ acts by $(s,c) \mapsto (\eta_ns, \eta_n^{\pm 1}c)$, where $n$ is the order of $g$ in $G_x$ and $\eta_n$ is a primitive $n$-th root of unity.

\begin{table}[h] \label{table:monodromy-quotient-singularities}
\begin{tabular}{|c|c|c|}
    \hline
    type & $G_x$ & quotient singularities \\
    \hline \hline
    $I_0^{\ast}$ & $\ZZ/2\ZZ$ & $4A_1$\\
    \hline
    $II$ & $\ZZ/6\ZZ$ & $A_{6,1}A_{3,1}A_1$ \\
    \hline
    $II^{\ast}$ & $\ZZ/6\ZZ$ & $A_5A_2A_1$ \\
    \hline
    $III$ & $\ZZ/4\ZZ$ & $2A_{4,1}A_1$ \\
    \hline
    $III^{\ast}$ & $\ZZ/4\ZZ$ & $2A_3A_1$ \\
    \hline
    $IV$ & $\ZZ/3\ZZ$ & $3A_{3,1}$ \\
    \hline
    $IV^{\ast}$ & $\ZZ/3\ZZ$ & $3A_2$ \\
    \hline
\end{tabular}
\caption{Monodromy groups and quotient singularities according to the type of singular fibres.}
\end{table}

The above table lists the monodromy groups and quotient singularities that appear in $(\Delta \times E)/G_x$, based on the types of singular fibres (for more details, see \cite[Table~V.10.5]{BHPVdV2004}).

Finally, when $G_x$ fixes a point of $E$, the corresponding $f'$-fibre has a $(-1)$-curve if and only if the $f$-fibre under $x$ is of type $II$, $III$ or $IV$.
In that case, the map $\mu$ is the composition of the following blowing down procedures.

\begin{figure}[ht]
    \includegraphics[height=2.4cm]{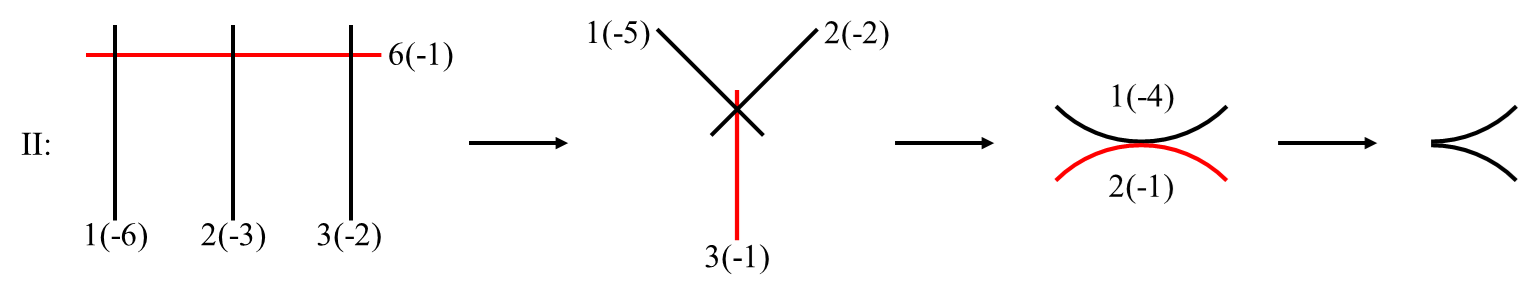}
    \includegraphics[height=2.4cm]{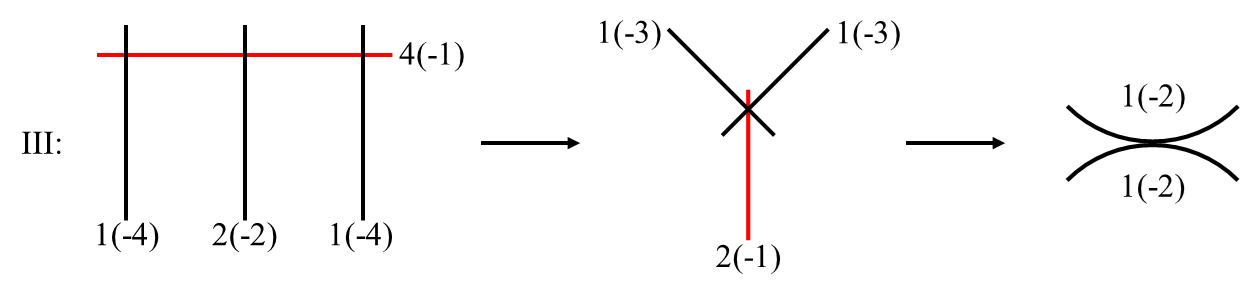}
    \includegraphics[height=2.4cm]{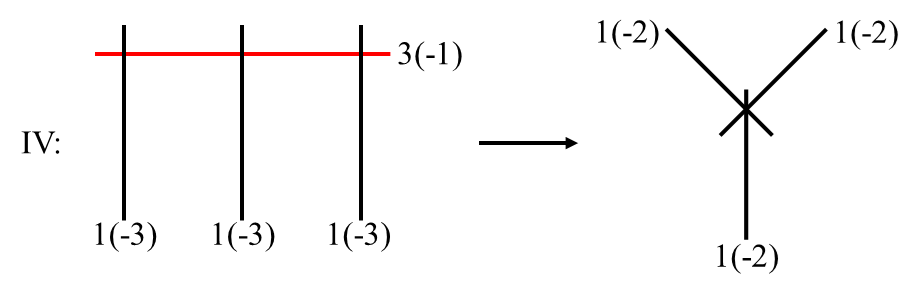}
    \caption{Blowing-down procedures. The notation $a(-b)$ indicates that the corresponding curve has multiplicity $a$ and self-intersection $-b$. The curves contracted in each procedure are colored red.}
    \label{fig:blowing-down-procedures}
    \centering
\end{figure}


\section{The non-vanishing conjecture for cotangent bundles} \label{sec:the-non-vanishing-conjecture}

In this section, we prove Theorem~\ref{thm:main-theorem}.
Let $f:S \rightarrow B$ be a relatively minimal isotrivial elliptic surface.
Write
 \[
  D = \sum_{b \in B} [f^{-1}(b) - f^{-1}(b)_{\mathrm{red}}] = \sum_{i=1}^s (\nu_i-1)F_i + D_0,
 \]
where $F_i$ is the reduction of a multiple fibre, $\nu_i$ is the multiplicity of $F_i$ and $D_0$ is the non-multiple, non-reduced part.
By Zariski's lemma \cite[Lemma~III.8.2]{BHPVdV2004}, the intersection matrix of $D_0$ is negative definite.
Thus, by \cite[Lemma~14.10]{Badescu2001}, $f^{\ast}\Omega_B^1(D)$ is pseudoeffective if and only if $f^{\ast}\Omega_B^1 \otimes \Ocal_S(\sum_{i=1}^s(\nu_i-1)F_i)$ is pseudoeffective.

Assume that $f^{\ast}\Omega_B^{1}(D)$ is not pseudoeffective, so that $B \simeq \PP^1$ and $\sum_{i=1}^s (1-\frac{1}{\nu_i}) < 2$.
Under the setting (\ref{eqn:isotrivial-elliptic-fibration}), let $Z_{\ast} \subset C$ be the set of points under which fibres are singular of type $\ast \in \{ I_0^{\ast}$, $II$, $II^{\ast}$, $III$, $III^{\ast}$, $IV$, $IV^{\ast} \}$.
Let $e_{\ast}$ be the ramification index of the quotient map $\pi:C \rightarrow B$ at a point $x \in Z_{\ast}$, namely
 \begin{equation} \label{eqn:ramification-index}
  e_{\ast} = |G_x| = \begin{cases}
      2, & \text{if } \ast = I_0^{\ast}, \\
      6, & \text{if } \ast = II \text{ or } II^{\ast}, \\
      4, & \text{if } \ast = III \text{ or } III^{\ast}, \\
      3, & \text{if } \ast = IV \text{ or } IV^{\ast}.
  \end{cases}
 \end{equation}

\begin{lem} [cf. {\cite[Lemma~6.4]{HP2020}}] \label{lem:number-of-ramification-points}
    Under the above setting, we have
     \begin{equation} \label{eqn:number-of-ramification-points}
      \sum (e_{\ast}-1)|Z_{\ast}| \geq 2g(C)-1.
     \end{equation}
\end{lem}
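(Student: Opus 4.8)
The plan is to play two formulas for $2g(C)-2$ against each other: Riemann--Hurwitz for the Galois cover $\pi\colon C\to B$ of degree $|G|$ in \eqref{eqn:isotrivial-elliptic-fibration}, and the classification of the ramification points of $\pi$ by the type of fibre they produce, as carried out in Section~\ref{sec:elliptic-surfaces}. The hypothesis that $f^{\ast}\Omega_B^1(D)$ is not pseudoeffective enters only through a numerical bound on the multiple fibres of $f$.

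Since $\pi$ is Galois with group $G$ and $B=C/G\simeq\PP^1$, Riemann--Hurwitz gives
\[
  2g(C)-2 \;=\; -2|G|+\sum_{x\in C}\bigl(|G_x|-1\bigr),
\]
the sum running over the finitely many $x$ with $|G_x|>1$. Each such $G_x$ is cyclic, hence acts on $E$ either by translations (freely) or with a fixed point, and this matches the dichotomy of Section~\ref{sec:elliptic-surfaces}: in the first case the fibre of $f$ over $\pi(x)$ is a multiple fibre of type ${}_{|G_x|}I_0$, and in the second case $x$ lies in exactly one $Z_{\ast}$ with $|G_x|=e_{\ast}$ as in \eqref{eqn:ramification-index}. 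Writing $Z_{\mathrm{tr}}\subset C$ for the first class, and noting (again by Section~\ref{sec:elliptic-surfaces} and Lemma~\ref{lem:singular-fibres-in-isotrivial}) that the multiple fibres of $f$ are precisely the fibres $F_1,\dots,F_s$, of type ${}_{\nu_i}I_0$, lying over $\pi(Z_{\mathrm{tr}})=\{b_1,\dots,b_s\}$, the formula becomes
\[
  2g(C)-2 \;=\; -2|G|+\sum_{i=1}^{s}\frac{|G|}{\nu_i}(\nu_i-1)+\sum_{\ast}(e_{\ast}-1)\,|Z_{\ast}|,
\]
using that the fibre of $\pi$ over $b_i$ has $|G|/\nu_i$ points of stabiliser order $\nu_i$. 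Since $\sum_i\frac{|G|}{\nu_i}(\nu_i-1)=|G|\sum_i(1-1/\nu_i)$, the asserted inequality \eqref{eqn:number-of-ramification-points} is equivalent to $\sum_{i=1}^{s}(1-1/\nu_i)<2$.

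It remains to deduce this bound from the hypothesis, via the equivalence recorded just before the statement: $f^{\ast}\Omega_B^1(D)$ is pseudoeffective iff $f^{\ast}\Omega_B^1\otimes\Ocal_S\bigl(\sum_i(\nu_i-1)F_i\bigr)$ is. Since the scheme-theoretic fibre over $b_i$ equals $\nu_iF_i$, one has $\nu_iF_i\equiv f^{\ast}[b_i]$ in $\Pic(S)\otimes\QQ$, so this line bundle is numerically $\QQ$-equivalent to the pullback of the $\QQ$-divisor $K_B+\sum_{i=1}^{s}(1-\tfrac1{\nu_i})[b_i]$ on $B\simeq\PP^1$, which has degree $-2+\sum_i(1-1/\nu_i)$. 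If this degree were $\geq0$, the divisor, and hence its pullback, would be pseudoeffective, contradicting the hypothesis; therefore $\sum_i(1-1/\nu_i)<2$, as required, and substituting back completes the proof.

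\textbf{Main obstacle.} The arithmetic is routine; the points that need care are (i) the identification of the translation-type ramification points of $\pi$ with the multiple fibres of $f$, and of the remaining ramification points with the sets $Z_{\ast}$ via \eqref{eqn:ramification-index} --- this is exactly the local picture of Section~\ref{sec:elliptic-surfaces}, together with the fact that each $G_x$ is cyclic, so it either acts freely on $E$ or conjugates into $\Aut(E,e)$; and (ii) the legitimacy of passing from ``$f^{\ast}\Omega_B^1(D)$ not pseudoeffective'' to $\sum_i(1-1/\nu_i)<2$, which is precisely the reduction already performed before the statement.
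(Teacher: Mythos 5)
Your proof is correct and follows essentially the same route as the paper: Riemann--Hurwitz for $\pi\colon C\to B$, splitting the ramification points into translation-type stabilizers (giving the multiple fibres, contributing $d\sum_i(1-1/\nu_i)$) and fixed-point-type stabilizers (giving the sets $Z_{\ast}$ with $e_{\ast}=|G_x|$), and then using $B\simeq\PP^1$ together with $\sum_i(1-1/\nu_i)<2$. The only difference is cosmetic: you additionally spell out why non-pseudoeffectivity of $f^{\ast}\Omega_B^1(D)$ forces $\sum_i(1-1/\nu_i)<2$, a step the paper takes for granted from the discussion preceding the lemma.
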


\begin{proof}
    By Riemann-Hurwitz formula, we have
     \[
      2g(C)-2 = d(2g(B)-2) + \sum_{x \in C} (e_x-1)
     \]
    where $d = \deg(\pi)$ and $e_x$ is the ramification index at $x$.
    Note that $e_x = |G_x|$.
    If the stabilizer group $G_x$ of $x \in C$ acts on $E$ by translation, then its order coincides with the multiplicity of the multiple fibre of $f$ under $x$.
    This property holds for every point in the orbit $G.x$.
    It follows that
     \[
      2g(C)-2 = d(2g(B)-2) + \sum (e_{\ast}-1)|Z_{\ast}| + \sum_{i=1}^s d \left( 1-\frac{1}{\nu_i} \right).
     \]
    Since $B \simeq \PP^1$ and $\sum_{i=1}^s (1-\frac{1}{\nu_i}) < 2$, this yields the desired inequality.
\end{proof}


\subsection{Logarithmic symmetric differentials} \label{subsec:log-symmetric-differential}

First, we will address the singular fibres of type $I_0^{\ast}$, $II$, $III$ and $IV$.
Consider $x \in Z_{III}$, and write $G_x = \langle g \rangle \simeq \ZZ/4\ZZ$.
Choose a small disc $x \in \Delta \subset C$ such that $(C \times E)/G$ is locally isomorphic to $(\Delta \times E)/G_x$.
Replacing the origin of $E$ if necessary, the fixed points of $g$ are $p_0 = (x,[0])$ and $p_1 = (x,[\frac{1}{2}+\frac{1}{2}i])$; the fixed point of $g^2$ is $p_2 = (x,[\frac{1}{2}])$.
The quotient singularities of $(\Delta \times E)/G_{x}$ at $q_{\nu} = q(p_{\nu})$ for $\nu = 0,1$ are $A_{4,1}$-singularities, so they are resolved by $(-4)$-curves $\Theta_{\nu}$; that of $q_2 = q(p_2)$ is an $A_1$-singularity, so it is resolved by a $(-2)$-curve $\Theta_2$.
Let $\Theta \subset S'|_{\Delta}$ be the proper transform of the central fibre of $(\Delta \times E)/G_{x}$ under $x$.
As shown by Figure~\ref{fig:blowing-down-procedures}, the $f'$-fibre under $x$ is $4\Theta + \Theta_0 + \Theta_1 + 2\Theta_2$ whose intersection numbers are
 \[
  (\Theta^2) = -1, \quad (\Theta_0^2) = (\Theta_1^2) = -4, \quad (\Theta_2^2) = -2, \quad
  (\Theta . \Theta_{\nu}) = 1.
 \]

To calculate the symmetric differentials on the surface $S'|_{\Delta}$, we first determine those admitting (at most) logarithmic poles along $\Theta_{\nu}$.
Let $E_0 = \Theta_0 + \Theta_1 + \Theta_2$. Define
 \[
  \Bcal_{\hbf} = S^{m}(\Omega_{S'}^1(\log E_0))(-\hbf \cdot E_0)
 \]
where $\hbf \in \ZZ_{\geq 0}^3$ and $\hbf \cdot E_0 = \sum_{\nu = 0}^2 h_{\nu}\Theta_{\nu}$.

\begin{lem} [cf. {\cite[Lemma~3.2]{BTVA2022}}] \label{lem:log-symmetric-differential-on-III}
    For $\hbf \in \ZZ_{\geq 0}^3$, we have
     \begin{equation} \label{eqn:symmetric-differentials-at-most-log-poles}
      H^0(S'|_{\Delta} \setminus E_0, \Bcal_{\hbf}) = H^0(S'|_{\Delta} \setminus E_0, S^m \Omega_{S'}^1) \simeq H^0(S'|_{\Delta}, \Bcal_{\hbf})
     \end{equation}
    if and only if $h_{\nu} < \frac{3m+1}{4}$ for $\nu = 0,1$ and $h_2 < \frac{m+1}{2}$.
\end{lem}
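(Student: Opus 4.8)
The plan is to compute everything locally on the quotient $(\Delta \times E)/G_x$ and pull back via the resolution $\lambda$. The key observation is that a symmetric differential on $S'|_\Delta$ with at most logarithmic poles along the $\Theta_\nu$ is the same thing, on the open part $S'|_\Delta \setminus E_0 \cong \big((\Delta \times E)/G_x\big) \setminus \{q_0,q_1,q_2\}$, as a $G_x$-invariant symmetric differential on $\Delta \times E$ (possibly with poles along the $G_x$-fixed points, i.e. along $\{x\} \times \{p_0,p_1,p_2\}$). So first I would set up the correspondence: on $\Delta \times E$ take the trivialising coordinates $(s,c)$ with $s$ the coordinate on $\Delta$ and $c$ the coordinate on $E$ near a fixed point, in which $g$ acts by $(s,c) \mapsto (\eta_4 s, \eta_4^{-1} c)$ (as recorded in Section~\ref{sec:elliptic-surfaces}), so $G_x$ acts on $ds$ by $\eta_4$ and on $dc$ by $\eta_4^{-1}$, hence a monomial $s^a c^b\, ds^{\,j}\, (dc/c)^{m-j}$ — or rather the right basis of $S^m\Omega^1$ adapted to the resolution — transforms by a definite character of $G_x$; $G_x$-invariance then forces a congruence on the exponents.

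Next I would translate the congruence into the bound on $\hbf$. The resolution $\lambda$ replaces the point $q_\nu$ by the curve $\Theta_\nu$, and the order of vanishing/pole of (the pullback of) a local section along $\Theta_\nu$ is governed by the Hirzebruch--Jung data: the $A_1$ singularity at $q_2$ (blown up once, $(\Theta_2^2)=-2$) and the $A_{4,1}$ singularities at $q_0,q_1$ (with $(\Theta_\nu^2)=-4$). Concretely, an invariant monomial on $\Delta \times E$, written in the local chart of the resolution along $\Theta_\nu$, acquires a zero (or pole) of order determined linearly by the exponents and the continued-fraction weights; combined with the divisor $-\hbf \cdot E_0$ twist in $\Bcal_{\hbf}$ this says that the full space of invariant symmetric differentials (= $H^0(S'|_\Delta \setminus E_0, S^m\Omega^1_{S'})$) already extends across $E_0$ as a section of $\Bcal_{\hbf}$ precisely when $h_\nu$ is small enough, yielding $h_0,h_1 < \frac{3m+1}{4}$ and $h_2 < \frac{m+1}{2}$. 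The first equality in \eqref{eqn:symmetric-differentials-at-most-log-poles} is then the statement that shrinking the pole order along $E_0$ from ``logarithmic'' down to $-\hbf\cdot E_0$ loses nothing, while the second isomorphism is simply that every section over the complement extends over $E_0$; the sharpness (``this result does not hold for larger ones'') comes from exhibiting one invariant monomial whose order along some $\Theta_\nu$ is exactly the threshold, so increasing any $h_\nu$ by one kills it.

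For the types $I_0^\ast$, $II$ and $IV$ I would note that the argument is formally identical: the local model is $(\Delta \times E)/G_x$ with $G_x \cong \ZZ/2\ZZ,\ \ZZ/6\ZZ,\ \ZZ/3\ZZ$ respectively, the fixed points and the Hirzebruch--Jung strings are those in Table~\ref{table:monodromy-quotient-singularities}, and one runs the same invariance-plus-continued-fraction bookkeeping; I would either state the $III$ case in detail and remark that the others are analogous, or record the resulting thresholds in each case. (The reference \cite[Lemma~3.2]{BTVA2022} presumably carries out exactly this kind of computation for cyclic quotient surface singularities, so I would cite it for the continued-fraction input and only do the elliptic-fibre-specific part.)

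The main obstacle I expect is the bookkeeping in the second step: correctly identifying, for each of the three fixed points $p_0,p_1,p_2$, which power of $\eta_4$ acts on the relevant generators of $S^m\Omega^1$ (this depends on the local character $(s,c)\mapsto(\eta_n s,\eta_n^{\pm1}c)$, and the sign matters), and then matching the $G_x$-invariance congruence with the Hirzebruch--Jung discrepancy data so that the inequalities come out exactly as $\frac{3m+1}{4}$ and $\frac{m+1}{2}$ rather than off by one. The conceptual content — invariant differentials upstairs $=$ log-differentials downstairs, and resolutions of cyclic quotient singularities do not change $H^0$ of symmetric differentials once one allows the correct (small) pole order — is standard; getting the constants right, and in particular verifying the sharpness claim by pinning down the extremal monomial, is where the care is needed.
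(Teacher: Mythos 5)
Your overall strategy coincides with the paper's: identify $H^0(S'|_{\Delta} \setminus E_0, S^m\Omega_{S'}^1)$ with $G_x$-invariant symmetric differentials on $\Delta \times E$ via reflexivity (this is exactly \eqref{eqn:isomorphism-of-log-differential}), then compute orders along the exceptional curves in explicit resolution charts and read off the admissible twists $\hbf$. But there are two concrete problems. First, you start from the wrong local character for a type $III$ fibre: you take $g:(s,c)\mapsto(\eta_4 s,\eta_4^{-1}c)$, which produces $A_3$ (du Val) singularities resolved by chains of three $(-2)$-curves; that is the type $III^{\ast}$ situation, which the paper treats separately via the Asega--de Oliveira--Weiss theorem, and it yields bounds of the shape \eqref{eqn:order-of-symmetric-differentials-on-an-singularity}, not the ones claimed here. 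For type $III$ the correct action at $p_0,p_1$ is $g:(s,c_{\nu})\mapsto(\eta_4 s,\eta_4 c_{\nu})$, giving the $\tfrac14(1,1)$-singularities $A_{4,1}$ whose minimal resolution is a single $(-4)$-curve, consistent with the fibre $4\Theta+\Theta_0+\Theta_1+2\Theta_2$ that you yourself quote. You even flag that ``the sign matters'' and then write down the wrong sign; since the entire content of the lemma is the numerical thresholds (which are dictated by this character together with the chart relations $s^4=y_{\nu}s_{\nu}^4$, $c_{\nu}^4=y_{\nu}$), this inconsistency would derail the computation rather than being a cosmetic slip.

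Second, the proposal defers exactly the part that constitutes the proof. The paper's argument is an explicit computation in Kodaira's charts $W_{\nu 1}$, $W_{\nu 2}$: it records the pullbacks $ds\mapsto 4^{-1}y_{\nu}^{-3/4}(4y_{\nu}ds_{\nu}+s_{\nu}dy_{\nu})$ and $dc_{\nu}\mapsto 4^{-1}y_{\nu}^{-3/4}dy_{\nu}$, extracts the valuation $\mathrm{ord}_{\Theta_{\nu}}$, and determines divisibility conditions on the coefficients of $(\frac{dy_{\nu}}{y_{\nu}})^{\ell}ds_{\nu}^{m-\ell}$ needed for a form to be logarithmic on \emph{both} charts --- it is the second chart $W_{\nu 2}$, which $\Theta$ does not meet, that produces the nontrivial constraint (e.g.\ $\frac{dy_{\nu}}{y_{\nu}}$ fails there while $y_{\nu}(\frac{dy_{\nu}}{y_{\nu}})^4$ works). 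You describe this as ``invariance-plus-continued-fraction bookkeeping'' still to be carried out and explicitly identify getting the constants and the sharpness claim right as the remaining difficulty; but that bookkeeping, including exhibiting the extremal forms, \emph{is} the lemma, so as it stands the claimed inequalities $h_{\nu}<\frac{3m+1}{4}$ and $h_2<\frac{m+1}{2}$ are not derived. A smaller point: the first equality in \eqref{eqn:symmetric-differentials-at-most-log-poles} is immediate because $\Bcal_{\hbf}$ and $S^m\Omega_{S'}^1$ agree away from $E_0$; it is not about ``shrinking the pole order loses nothing'' --- all the content sits in the second isomorphism (extension across $E_0$) and in its failure for larger $\hbf$.
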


From the isomorphism
 \begin{equation} \label{eqn:isomorphism-of-log-differential}
 \begin{split}
  H^0(\Delta \times E, S^{m}\Omega_{C \times E}^1)^{G_x}
    &\simeq H^0(\Delta \times E \setminus \{ p_0,p_1,p_2 \}, S^{m}\Omega_{C \times E}^1)^{G_x} \\
    &\simeq H^0(S'|_{\Delta} \setminus E_0, S^m \Omega_{S'}^1),
 \end{split}
 \end{equation}
which is true as $\Delta \times E$ is smooth and $S^{m}\Omega_{C \times E}^1$ is reflexive, one can identify the space of logarithmic symmetric differentials on $S'$ as a subspace of symmetric differentials on $\Delta \times E$.

\begin{proof} [Proof of Lemma~\ref{lem:log-symmetric-differential-on-III}]
    Choose neighborhoods $\Delta_{\nu}$ of $p_{\nu}$ with respective local coordinates $(s,c_{\nu})$ such that $g:(s,c_{\nu}) \mapsto (\eta_4 s,\eta_4 c_{\nu})$ for $\nu=0,1$ and $g^2:(s,c_2) \mapsto (-s,-c_2)$.
    Then by \cite[II, p.583]{Kodaira1963} there exist coordinate charts $W_{\nu 1}$ and $W_{\nu 2}$ on $S'|_{\Delta}$ whose respective local coordinates are $(y_{\nu}, s_{\nu})$ and $(x_{\nu}, t_{\nu})$ such that
     \[
      \begin{cases}
          s^4 = y_{\nu}s_{\nu}^4 = x_{\nu}, \\
          c_{\nu}^4 = y_{\nu} = x_{\nu}t_{\nu}^4
      \end{cases}
     \]
    for $\nu = 0,1$ and
     \[
      \begin{cases}
          s^2 = y_{2}s_{2}^2 = x_2, \\ c_{2}^2 = y_{2} = x_{2}t_{2}^2.
      \end{cases}
     \]
    In those charts, $\Theta_{\nu}$ is defined by $y_{\nu} = x_{\nu} = 0$ and $\Theta$ is defined by $s_{\nu} = 0$.
    Moreover, $\Theta$ does not meet $W_{\nu 2}$.
    We visualize the setting in Figure~\ref{fig:figure_III_charts}.
    
    \begin{figure}[ht]
    \includegraphics[height=3.6cm]{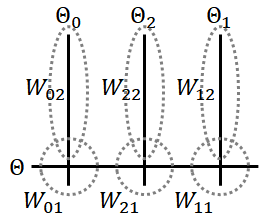}
    \caption{Coordinate charts on the $f'$-fibre of type $III$.}
    \label{fig:figure_III_charts}
    \centering
    \end{figure}
    
    For $\nu = 0,1$, following \cite[Section~3.2]{BTVA2022}, one can describe the valuation $\mathrm{ord}_{\Theta_{\nu}}$ on
    \[
        \bigoplus_{m \in \ZZ_{\geq 0}} H^0(\Delta_{\nu}, S^m \Omega_{C \times E}^1) \simeq \CC[s,c_{\nu}, ds, dc_{\nu}]
    \]
    as follows. It is expressed using the ring map
     \[
      \CC[s,c_{\nu},ds,dc_{\nu}] \rightarrow \CC (y_{\nu}^{1/4})[s_{\nu}, dy_{\nu}, ds_{\nu}]
     \]
    given by
     \[
      s \mapsto y_{\nu}^{1/4}s_{\nu}, \qquad c_{\nu} \mapsto y_{\nu}^{1/4},
     \]
     \[
      ds \mapsto 4^{-1}y_{\nu}^{-3/4}(4y_{\nu}ds_{\nu} + s_{\nu}dy_{\nu}), \qquad
      dc_{\nu} \mapsto 4^{-1}y_{\nu}^{-3/4}dy_{\nu}
     \]
    on $W_{\nu 1}$, and using the ring map
     \[
      \CC[s,c_{\nu},ds,dc_{\nu}] \rightarrow \CC (x_{\nu}^{1/4})[t_{\nu}, dx_{\nu}, dt_{\nu}]
     \]
    given by
     \[
      s \mapsto x_{\nu}^{1/4}, \qquad c_{\nu} \mapsto x_{\nu}^{1/4}t_{\nu},
     \]
     \[
      ds \mapsto 4^{-1}x_{\nu}^{-3/4}dx_{\nu}, \qquad
      dc_{\nu} \mapsto 4^{-1}x_{\nu}^{-3/4}(4x_{\nu}dt_{\nu} + t_{\nu}dx_{\nu})
     \]
    on $W_{\nu 2}$.
    Hence the valuation $\mathrm{ord}_{\Theta_{\nu}}$ satisfies
     \[
      \mathrm{ord}_{\Theta_{\nu}}(s) = \mathrm{ord}_{\Theta_{\nu}}(c_{\nu}) = \frac{1}{4}, \qquad
      \mathrm{ord}_{\Theta_{\nu}}(ds) = \mathrm{ord}_{\Theta_{\nu}}(dc_{\nu}) = -\frac{3}{4}.
     \]
    Since $H^0(W_{\nu 1}, S^{m}(\Omega_{S'}^1(\log E_0)))$ is the free $\CC[y_{\nu}, s_{\nu}]$-module generated by
     \[
      \left( \frac{dy_{\nu}}{y_{\nu}} \right)^m, \left( \frac{dy_{\nu}}{y_{\nu}} \right)^{m-1}ds_{\nu}, \dots, ds_{\nu}^{m}
     \]
    and $H^0(W_{\nu 2}, S^{m}(\Omega_{S'}^1(\log E_0)))$ is the free $\CC[x_{\nu}, t_{\nu}]$-module generated by
     \[
      \left( \frac{dx_{\nu}}{x_{\nu}} \right)^m, \left( \frac{dx_{\nu}}{x_{\nu}} \right)^{m-1}dt_{\nu}, \dots, dt_{\nu}^{m},
     \]
    one can see that $\frac{dy_{\nu}}{y_{\nu}}$ does not define a logarithmic symmetric differential on $W_{\nu 1} \cup W_{\nu 2}$, whilst $y_{\nu}(\frac{dy_{\nu}}{y_{\nu}})^4$ does.
    
    Similarly, for $\nu = 2$, the valuation $\mathrm{ord}_{\Theta_2}$ is expressed using
    \[
        \CC[s,c_2,ds,dc_2] \rightarrow \CC(y_2^{1/2})[s_2, dy_2, ds_2]
    \]
    given by
     \[
      s \mapsto y_{2}^{1/2}s_{2}, \qquad c_{2} \mapsto y_{2}^{1/2},
     \]
     \[
      ds \mapsto 2^{-1}y_{2}^{-1/2}(2y_{2}ds_{2} + s_{2}dy_{2}), \qquad
      dc_{2} \mapsto 2^{-1}y_{2}^{-1/2}dy_{2}
     \]
    on $W_{21}$, and using the ring map
     \[
      \CC[s,c_{2},ds,dc_{2}] \rightarrow \CC (x_{2}^{1/2})[t_{2}, dx_{2}, dt_{2}]
     \]
    given by
     \[
      s \mapsto x_{2}^{1/2}, \qquad c_{2} \mapsto x_{2}^{1/2}t_{2},
     \]
     \[
      ds \mapsto 2^{-1}x_{2}^{-1/2}dx_{2}, \qquad
      dc_{2} \mapsto 2^{-1}x_{2}^{-1/2}(2x_{2}dt_{2} + t_{2}dx_{2})
     \]
    on $W_{22}$.
    Thus one can see that $\frac{dy_2}{y_2}$ does not define a logarithmic symmetric differential on $W_{21} \cup W_{22}$, while $y_{2}(\frac{dy_2}{y_2})^2$ does.
    In this fashion, one concludes that
     \[
      H^0(S'|_{\Delta}, S^{m}(\Omega_{S'}^1(\log E_0)))
        \subset \bigoplus_{\nu} H^0(W_{\nu 0}, S^{m}(\Omega_{S'}^1(\log E_0)))
     \]
    consists of forms for which the coefficients of $(\frac{dy_{\nu}}{y_{\nu}})^{\ell} ds_{\nu}^{m-\ell}$ are divisible by $y_{\nu}^{\lceil \ell/4 \rceil}$ if $\nu = 0,1$; those of $(\frac{dy_{2}}{y_{2}})^{\ell} ds_{2}^{m-\ell}$ are divisible by $y_2^{\lceil \ell/2 \rceil}$.
    This proves Lemma~\ref{lem:log-symmetric-differential-on-III}.
\end{proof}

The above anaylsis yields similar results for the singular fibres of types $I_0^{\ast}$, $II$ and $IV$.
The result for $I_0^{\ast}$ is already known in \cite{BTVA2022}, but we include it for the sake of completeness.

\begin{lem} \label{lem:log-symmetric-differential}
    Let $x \in Z_{\ast}$, and choose a small disc $x \in \Delta \subset C$ such that $(C \times E)/G$ is locally isomorphic to $(\Delta \times E)/G_x$.
    Let $\Theta$ be the proper transform of the central fibre of $(\Delta \times E)/G_x$.
    Then we have
    \begin{enumerate}
        \item If $\ast = I_0^{\ast}$, the $f'$-fibre under $x$ is $2\Theta + \Theta_0 + \Theta_1 + \Theta_2 + \Theta_3$ with self-intersection $-2$ and $(\Theta.\Theta_{\nu}) = 1$ for all $\nu$.
        For a tuple $\hbf \in \mathbf{Z}_{\geq 0}^{4}$, we have
         \[
          H^0(S'|_{\Delta} \setminus E_0, S^m(\Omega_{S'}^1)) \simeq H^0(S'|_{\Delta}, S^m(\Omega_{S'}^1(\log E_0))(-\hbf \cdot E_0))
         \]
        if and only if $h_{\nu} < \frac{m+1}{2}$ for all $\nu$, where $E_0 = \sum_{\nu} \Theta_{\nu}$.
        \item If $\ast = II$, the $f'$-fibre under $x$ is $6\Theta + \Theta_0 + 2\Theta_1 + 3\Theta_2$ with self-intersection $(\Theta^2) = -1$, $(\Theta_{\nu}^2) = -6/(\nu+1)$ and $(\Theta.\Theta_{\nu}) = 1$ for all $\nu$.
        For a tuple $\hbf \in \mathbf{Z}_{\geq 0}^{3}$, we have
         \[
          H^0(S'|_{\Delta} \setminus E_0, S^m(\Omega_{S'}^1)) \simeq H^0(S'|_{\Delta}, S^m(\Omega_{S'}^1(\log E_0))(-\hbf \cdot E_0))
         \]
        if and only if $h_{\nu} < \frac{(6/(\nu+1)-1)m+1}{6/(\nu+1)}$ for all $\nu$, where $E_0 = \sum_{\nu} \Theta_{\nu}$.
        \item If $\ast = IV$, the $f'$-fibre under $x$ is $3\Theta + \Theta_0 + \Theta_1 + \Theta_2$ with self-intersection $(\Theta^2) = -1$, $(\Theta_{\nu}^2) = -3$ and $(\Theta.\Theta_{\nu}) = 1$ for all $\nu$.
        For a tuple $\hbf \in \mathbf{Z}_{\geq 0}^{3}$, we have
         \[
          H^0(S'|_{\Delta} \setminus E_0, S^m(\Omega_{S'}^1)) \simeq H^0(S'|_{\Delta}, S^m(\Omega_{S'}^1(\log E_0))(-\hbf \cdot E_0))
         \]
        if and only if $h_{\nu} < \frac{2m+1}{3}$ for all $\nu$, where $E_0 = \sum_{\nu} \Theta_{\nu}$.
    \end{enumerate}
\end{lem}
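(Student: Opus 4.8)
The plan is to imitate the proof of Lemma~\ref{lem:log-symmetric-differential-on-III} case by case, the only input that changes being the local $G_x$-action on $\Delta \times E$ near each fixed point and the resulting Hirzebruch--Jung data recorded in Table~\ref{table:monodromy-quotient-singularities}. In each case I would first fix local coordinates $(s,c_\nu)$ at each fixed point $p_\nu$ on $\Delta \times E$ in which $g$ (or the relevant power of $g$) acts diagonally by a root of unity: for $\ast = I_0^\ast$ the generator of $\ZZ/2\ZZ$ acts by $(s,c_\nu) \mapsto (-s,-c_\nu)$ at all four fixed points; for $\ast = II$ the generator of $\ZZ/6\ZZ$ acts at $p_0$ by $(\eta_6 s,\eta_6 c_0)$, and one passes to the subgroups of order $3$ and $2$ at $p_1$ and $p_2$; for $\ast = IV$ the generator of $\ZZ/3\ZZ$ acts by $(\eta_3 s,\eta_3 c_\nu)$ at each of the three fixed points. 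The singularity type $\frac{1}{n}(1,1)$ of $(\Delta \times E)/G_x$ at each $q_\nu$ then matches the entries of Table~\ref{table:monodromy-quotient-singularities}, and the corresponding $\Theta_\nu$ is the exceptional $\PP^1$ with $(\Theta_\nu^2)$ as listed.

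Next, for each singular point I would write down, exactly as in the proof of Lemma~\ref{lem:log-symmetric-differential-on-III}, the two standard affine charts $W_{\nu 0}, W_{\nu 1}$ of the resolution with coordinates built from $s^n = y_\nu s_\nu^n = x_\nu$ and $c_\nu^n = y_\nu = x_\nu t_\nu^n$ (here $n = e_\ast$ or its relevant divisor), so that $\Theta_\nu = \{y_\nu = 0\}$ and $\Theta = \{s_\nu = 0\}$, with $\Theta$ disjoint from $W_{\nu 1}$. Pulling back through the map (\ref{eqn:isomorphism-of-log-differential}) one obtains $\mathrm{ord}_{\Theta_\nu}(ds) = \mathrm{ord}_{\Theta_\nu}(dc_\nu) = -(n-1)/n$ and $\mathrm{ord}_{\Theta_\nu}(s) = \mathrm{ord}_{\Theta_\nu}(c_\nu) = 1/n$. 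Expressing the free generators of $H^0(W_{\nu i}, S^m(\Omega_{S'}^1(\log E_0)))$ and computing when the monomial $(dy_\nu/y_\nu)^\ell ds_\nu^{m-\ell}$ extends over both charts yields the divisibility by $y_\nu^{\lceil \ell/n \rceil}$, hence the sharp bound $h_\nu < \frac{(n-1)m+1}{n}$: this gives $h_\nu < \frac{m+1}{2}$ for $I_0^\ast$ ($n=2$), $h_\nu < \frac{2m+1}{3}$ for $IV$ ($n=3$), and for $II$ the three values $n = 6,3,2$ give exactly $h_\nu < \frac{(6/(\nu+1)-1)m+1}{6/(\nu+1)}$ after identifying $\Theta_\nu^2 = -6/(\nu+1)$ with $n = 6/(\nu+1)$. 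The first isomorphism in each display (with $\Bcal_{\hbf}$ and the full $S^m\Omega_{S'}^1$ agreeing away from $E_0$) is formal from reflexivity of $S^m\Omega^1_{\Delta\times E}$ together with (\ref{eqn:isomorphism-of-log-differential}) and is uniform across the cases.

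The bookkeeping obstacle, and the one place where some care is genuinely needed, is the type $II$ case: unlike $III$ and $IV$, the three fixed points of the $\ZZ/6\ZZ$-action are stabilized by subgroups of \emph{different} orders ($6$, $3$, $2$), so one must check that the chart-pasting computation at $p_1$ and $p_2$ can be carried out with respect to the subgroup $\langle g^2\rangle$, resp. $\langle g^3\rangle$, rather than all of $G_x$ — i.e. that the local picture of $S'$ near $\Theta_1, \Theta_2$ is still that of the minimal resolution of a cyclic quotient of order $3$, resp. $2$. This follows from the fact that the relevant fixed points are \emph{not} fixed by larger subgroups, so the analytic-local quotient is exactly by the stabilizer; once this is granted, the three subcomputations are literally the $n=6$, $n=3$, $n=2$ instances of the single computation already done for $III$. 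I would also remark, as the excerpt does after Lemma~\ref{lem:log-symmetric-differential-on-III}, that the stated bounds are sharp: the generators $y_\nu(dy_\nu/y_\nu)^n$ exhibit nonzero logarithmic symmetric differentials violating the next value of $h_\nu$, so no larger $h_\nu$ can work. Assembling these observations case by case completes the proof of Lemma~\ref{lem:log-symmetric-differential}.
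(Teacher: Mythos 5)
Your proposal follows essentially the same route as the paper, whose proof of this lemma consists precisely of rerunning the chart computation of Lemma~\ref{lem:log-symmetric-differential-on-III} at each fixed point with the relevant cyclic order $n$ (namely $n=2$ for $I_0^{\ast}$, $n=3$ for $IV$, and $n=6,3,2$ at the three points for $II$, the local model being the quotient by the stabilizer), and your local actions, Kodaira-type charts, valuations and resulting bounds $h_{\nu} < \frac{(n-1)m+1}{n}$ match the paper's. The only loose point is your sharpness witness $y_{\nu}\bigl(\tfrac{dy_{\nu}}{y_{\nu}}\bigr)^{n}$, whose logarithmic vanishing order along $\Theta_{\nu}$ is $1$ rather than the stated threshold, so it does not by itself settle the ``only if'' direction --- but the paper's own argument is no more explicit on that point.
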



\subsection{Local obstructions} \label{subsec:local-obstructions}

Now let $A_C$ be an ample divisor on $C$ and $A_E$ be an ample divisor of degree one on $E$.
Set $A = A_C \boxtimes A_E$ on $C \times E$.
Choose a basis $s_{j,0}, s_{j,1}, \dots, s_{j,j-2}, s_{j,j}$ of $H^0(E, \Ocal_E(jA_E))$ such that each $s_{j,k}$ has vanishing order exactly $k$ at the origin $e \in E$.

As before, consider $x \in Z_{III}$ and choose a disc $x \in \Delta \subset C$ small enough.
Let $g$ be a generator of $G_x \simeq \ZZ/4\ZZ$, and $p_0 \in \Delta \times E$ a fixed point of $G_x$.
Then the singularity at $p_0$ is resolved by a $(-4)$-curve $\Theta_0$.
Choose a local coordinate $(s,c_0)$ at $p_0$ where $g$ acts by the multiplication by $\eta_4$.
We have a decomposition
 \[
  H^0(\Delta \times E, S^m\Omega_{C \times E}^1 \otimes \Ocal_{C \times E}(jA)) = \bigoplus_n V_{m,n}
 \]
where
 \[
  V_{m,n} = \langle s^{n-k}s_{j,k}ds^{\ell}dc_0^{m-\ell}:k = 0,1,\dots,j-2,j,\, 0 \leq k \leq n,\, 0 \leq \ell \leq m \rangle.
 \]
For $\omega \in H^0(\Delta \times E, S^m\Omega_{C \times E}^1 \otimes \Ocal_{C \times E}(jA))$, write $\omega = \sum_n \omega_n$ with $\omega_n \in V_{m,n}$.

Choose an ample divisor $A_{S'}$ on $S'$ and an ample Cartier divisor $\overline{A}$ on $(C \times E)/G$ such that there exist injective maps $\Ocal_{S'}(A_{S'}) \hookrightarrow \Ocal_{S'}(\lambda^{\ast}\overline{A})$ and $\Ocal_{C \times E}(q^{\ast}\overline{A}) \hookrightarrow \Ocal_{C \times E}(NA)$ for some $N \gg 0$.
Then there exists an injection
 \[
  \Phi:H^0(S',S^{m}\Omega_{S'}^1 \otimes \Ocal_{S'}(jA_{S'})) \hookrightarrow H^0(C \times E, S^m\Omega_{C \times E}^1 \otimes \Ocal_{C \times E}(NjA)),
 \]
as well as
 \[
  \Phi_{\Delta}:H^0(S'|_{\Delta},S^{m}\Omega_{S'}^1 \otimes \Ocal_{S'}(jA_{S'})) \hookrightarrow H^0(\Delta \times E, S^m\Omega_{C \times E}^1 \otimes \Ocal_{C \times E}(NjA)).
 \]
Remark that if $\omega \in \mathrm{im}(\Phi_{\Delta})$, then so is $\omega_n$ for each $n$ by \cite[Prop~3.3]{BTVA2022}.

\begin{lem}[cf. {\cite[Corollary~6.11]{HP2020}}] \label{lem:vanishing-order-of-symmetric-differential}
    For $\omega \in \mathrm{im}(\Phi_{\Delta})$, we have
     \[
      \omega \in H^0(\Delta \times E, I_{(x,e)}^n \otimes S^m\Omega_{C \times E}^1 \otimes \Ocal_{C \times E}(NjA))
     \]
    with $n \geq 3m-4Nj$, where $I_{(x,e)}$ is the ideal sheaf of $(x,e) \in \Delta \times E$.
\end{lem}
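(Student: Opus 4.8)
The plan is to decompose $\omega$ into its graded pieces and argue for each one separately. Since each $\omega_n$ again lies in $\mathrm{im}(\Phi_\Delta)$, it suffices to prove the statement for a single $\omega=\omega_n\in V_{m,n}\cap\mathrm{im}(\Phi_\Delta)$. The first observation I would record is that a nonzero element of $V_{m,n}$ has multiplicity \emph{exactly} $n$ at $(x,e)$: its coefficients lie in the span of the $s^{n-k}s_{j,k}$, and the leading terms of these at $(x,e)$ are the linearly independent monomials $s^{n-k}c^{k}$ of degree $n$, where $c$ is a local coordinate of $E$ at $e$. As the distinct graded pieces of $\omega$ produce leading terms in distinct degrees, no cancellation occurs and $\mathrm{mult}_{(x,e)}(\omega)=\min\{n:\omega_n\neq0\}$. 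Hence the statement is equivalent to: $\omega_n\neq0$ forces $n\ge 3m-4Nj$, i.e. it is enough to bound $\mathrm{mult}_{(x,e)}(\omega_n)$ below by $3m-4Nj$.

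Next I would set up a dictionary between $S'$ and $\Delta\times E$ via the divisorial valuation $v:=\mathrm{ord}_{\Theta_0}$ attached to the exceptional curve $\Theta_0$ lying over the quotient singularity $q_0=q(x,e)$, extended to symmetric differentials. Using the explicit charts in the proof of Lemma~\ref{lem:log-symmetric-differential-on-III} (so $y_0=c^{4}$, $s_0=s/c$, $v(dy_0)=v(ds_0)=0$), this valuation becomes, on $\Delta\times E$ near $(x,e)$, the monomial valuation with $v(s)=v(c)=\tfrac14$ and $v(ds)=v(dc)=-\tfrac34$; likewise the $(-1)$-curve $\Theta$ lifts to $\{s=0\}$. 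In particular, for a nonzero $\omega_n\in V_{m,n}$, writing it in the chart $W_{01}$ and expanding the leading terms of its coefficients together with the leading terms of $ds^{r}dc^{m-r}$, one finds that the lowest-order part of $\omega_n$ along $\Theta_0$ sits in $v$-degree $\tfrac14(n-3m)$, \emph{provided} the polynomial $\sum_{k,r}b_{kr}\alpha_k s_0^{\,n-k+r}$ obtained from the coefficients does not vanish; the point of the $V_{m,n}$-homogeneity, combined with the linear independence of the $s_{j,k}$, is precisely to control this degeneracy. Thus, up to the effect of the ample twist, $v(\omega_n)=\tfrac14(n-3m)$.

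The core of the argument is then that, by $\Phi_\Delta$, $\omega_n$ is the image of a section $\eta_n$ of $S^{m}\Omega^1_{S'}\otimes\Ocal_{S'}(jA_{S'})$ which is \emph{genuinely regular} on $S'|_\Delta$ — in particular along the $(-1)$-curve $\Theta$, not merely away from $E_0$ as in Lemma~\ref{lem:log-symmetric-differential-on-III}. Choosing $A_{S'}$ disjoint from $\Theta_0$, regularity of $\eta_n$ along $\Theta_0$ gives $\mathrm{ord}_{\Theta_0}(\eta_n)\ge0$, and since $\omega_n$ and $\eta_n$ agree as rational symmetric differentials this yields $v(\omega_n)\ge0$ in the untwisted case; combined with the computation of $v(\omega_n)$ above, this already forces $n\ge3m$. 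To handle the twist I would track the inclusions $\Ocal_{S'}(jA_{S'})\hookrightarrow\Ocal_{S'}(j\lambda^{\ast}\Abar)$ and $\Ocal_{C\times E}(q^{\ast}\Abar)\hookrightarrow\Ocal_{C\times E}(NA)$ defining $\Phi_\Delta$: these change $v(\omega_n)$ by at most $Nj$, which costs $4Nj=e_{III}\cdot Nj$ in $\mathrm{mult}_{(x,e)}$ because $\mathrm{ord}_{\Theta_0}$ has weight $\tfrac14$ at $(x,e)$. This gives $\mathrm{mult}_{(x,e)}(\omega_n)\ge 3m-4Nj$, hence $n\ge 3m-4Nj$, and by the reduction in the first paragraph we are done.

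I expect the main obstacle to be exactly the local leading-term analysis in the second and third paragraphs: making rigorous that regularity of $\eta_n$ on the \emph{non-relatively-minimal} model $S'$ — through the interaction of $\Theta$ with the chain $\Theta_0+\Theta_1+\Theta_2$ in the explicit charts of Lemma~\ref{lem:log-symmetric-differential-on-III} — forces the pullback $\omega_n$ to vanish to order $\ge 3m-4Nj$ at $(x,e)$, and in particular that the homogeneity $\omega_n\in V_{m,n}$ prevents accidental cancellations among the $ds^{r}dc^{m-r}$-components from lifting the valuation $v(\omega_n)$ far above $\tfrac14(n-3m)$. This is the analogue of \cite[Lemma~6.11 and Corollary~6.12]{HP2020}, whose argument has to be rerun by hand here because $S'$ carries the extra $(-1)$-curve $\Theta$. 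The corresponding estimates for singular fibres of types $I_0^{\ast}$, $II$ and $IV$ follow in the same way, with $3m-4Nj$ replaced by $(e_{\ast}-1)m-e_{\ast}Nj$ according to Lemma~\ref{lem:log-symmetric-differential}.
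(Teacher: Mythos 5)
Your first paragraph (decomposing into homogeneous pieces $\omega_n\in V_{m,n}\cap\mathrm{im}(\Phi_\Delta)$ and noting $\mathrm{mult}_{(x,e)}\omega=\min\{n:\omega_n\neq0\}$) matches the paper. But the core of your argument rests on the claim that $\mathrm{ord}_{\Theta_0}(\omega_n)=\tfrac{1}{4}(n-3m)$ \emph{exactly} for every nonzero $\omega_n\in V_{m,n}$, with homogeneity preventing cancellation. That claim is false, and it fails precisely for the forms that matter: the leading polynomial you write down vanishes exactly when $\omega_n$ is divisible by $y_0\,ds_0=c_0\,ds-s\,dc_0$ (globally, by $s\,dc_0-s_{1,1}\,ds$), and the nonzero elements of $V_{m,n}\cap\mathrm{im}(\Phi_\Delta)$ with $n<3m$ are exactly of this shape --- regularity on $S'$ forces $\mathrm{ord}_{\Theta_0}(\omega_n)$ to jump strictly above $\tfrac{1}{4}(n-3m)$, it does not pin it down. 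With only the automatic inequality $\mathrm{ord}_{\Theta_0}(\omega_n)\ge\tfrac{1}{4}(n-3m)$, your regularity estimate $\mathrm{ord}_{\Theta_0}(\omega_n)\ge-Nj$ (itself only sketched) gives no lower bound on $n$ whatsoever: the inequality runs the wrong way, so the argument collapses exactly in the nontrivial case.

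The paper closes this gap by a different mechanism. From the chart computation and \cite[Section~3]{BTVA2022} one gets a dichotomy: either $\mathrm{ord}_{\Theta_0}(\omega_n)=\tfrac{1}{4}(n-3m)$, or $\omega_n$ is divisible by $s\,dc_0-s_{1,1}\,ds$. Iterating, a form $\omega_n\in V_{m,n}\cap\mathrm{im}(\Phi_\Delta)$ with $n<3m$ must factor as $\eta_n\cdot(s\,dc_0-s_{1,1}\,ds)^{\frac{3m-n}{4}}$, and each division consumes one copy of $A$ in the twist, so $\eta_n$ is a section of $S^{\frac{m+n}{4}}\Omega^1_{\Delta\times E}$ twisted only by $\bigl(Nj-\frac{3m-n}{4}\bigr)A$. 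The conclusion then comes from $H^0(E,\Ocal_E(dA_E))=0$ for $d<0$: the twist is exhausted unless $n\ge 3m-4Nj$. This ``pay one unit of $A_E$ per division'' bookkeeping is what replaces your heuristic ``the twist changes $v(\omega_n)$ by at most $Nj$,'' and it cannot be recovered from a valuation bound alone, since the twist $NjA$ is not disjoint from the relevant locus (the sections $s_{j,k}$ vanish at $e$). Without the divisibility step your proof does not go through.
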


\begin{proof}
    Recall from the proof of Lemma~\ref{lem:log-symmetric-differential-on-III} that there exists a coordinate chart $W_{01}$ on $S'|_{\Delta}$ with local coordinate $(y_0,s_0)$ such that
    \[
        \begin{cases}
            s^4 = y_0s_0^4, \\
            c_0^4 = y_0
        \end{cases}
    \]
    and $\Theta_0$ is defined by $y_0 = 0$.
    In the chart $W_{01}$, we have
     \[
      s^{n-k}s_{j,k}ds^{\ell}dc_{0}^{m-\ell} = 4^{-m}y_0^{\frac{n-3m}{4}}s_0^{n-k}(s_0dy_0 + 4y_0ds_0)^{\ell}dy_0^{m-\ell} + O(y_0^{\frac{n-3m}{4}+1}).
     \]
    Thus by observing the leading terms with respect to $y_0$ and $ds_0$, one can deduce that for any $\omega \in V_{m,n}$, $\mathrm{ord}_{\Theta_0}(\omega) > \frac{n-3m}{4}$ if and only if $y_0ds_0 = c_0ds-sdc_0$ divides $\omega$.
    Hence for $\omega_n \in V_{m,n} \cap \mathrm{im}(\Phi_{\Delta})$, we have
     \[
      \omega_n = \begin{cases}
          \eta_n \times (sdc_0 - s_{1,1}ds)^{\frac{3m-n}{4}}, & \text{if } \frac{3}{5}m \leq n < 3m, \\
          0, & \text{if } n < \frac{3}{5}m
      \end{cases}
     \]
    for some $\eta_n \in H^0(\Delta \times E, S^{\frac{m+n}{4}}\Omega_{C \times E}^1 \otimes \Ocal_{C \times E} ((Nj - \frac{3m-n}{4}) A))$.
    Now since $H^0(E, \Ocal_E((Nj - \frac{3m-n}{4}) A_E)) = 0$ if $Nj < \frac{3m-n}{4}$, the form $\omega_n$ is nonzero only when $n \geq 3m-4Nj$.
\end{proof}


\subsection{The case of $II^{\ast}$, $III^{\ast}$ and $IV^{\ast}$} \label{subsec:local-obstructions}

Consider $x \in Z_{III^{\ast}}$ and write $G_x = \langle g \rangle \simeq \mathbf{Z}/4\mathbf{Z}$.
Again, choose a small disc $x \in \Delta \subset C$ as before.
Replacing the origin of $E$ if necessary, the fixed points of $g$ are $p_0 = (x,[0])$ and $p_1 = (x,[\frac{1}{2} + \frac{1}{2}i])$; the fixed points of $g^2$ is $p_2 = (x, [\frac{1}{2}])$.
One can choose local coordinates $(s,c_{\nu})$ at $p_{\nu}$ such that $g:(s,c_{\nu}) \mapsto (\eta_4 s,\eta_4^{-1}c_{\nu})$ for $\nu = 0,1$ and $g^2:(s,c_2) \mapsto (-s,-c_2)$.
Hence the quotient $(\Delta \times E)/G_x$ has an $A_3$-singularity at $q_{\nu} = \pi(p_{\nu})$ if $\nu = 0,1$ and an $A_1$-singularity at $q_2 = \pi(p_2)$.
Analogously to Lemma~\ref{lem:log-symmetric-differential-on-III}, we have:

\begin{thm}[{\cite[Theorem~3.(a)]{AOW2023}}]
    Suppose that a normal surface $S$ has an $A_n$-singularity at $0 \in S$.
    Let $\lambda:(\widetilde{S},E_0) \rightarrow (S,0)$ be the minimal resolution.
    Then for a tuple $\hbf = (h_1, \dots, h_n) \in \ZZ_{\geq 0}^n$, we have
     \[
      H^0(\widetilde{S} \setminus E_0, S^m \Omega_{\widetilde{S}}^1) \simeq H^0(\widetilde{S}, S^m \Omega_{\widetilde{S}}^1(\log E_0) \otimes \Ocal_{\widetilde{S}}(-\hbf \cdot E_0))
     \]
    if and only if
     \begin{equation} \label{eqn:order-of-symmetric-differentials-on-an-singularity}
      h_i \leq \sum_{j=0}^{\min \{ i-1, n-i \}} \left\lceil \frac{m-2j}{n+1} \right\rceil
     \end{equation}
    for each $i$, where $E_0 = \Theta_1 + \cdots + \Theta_n$ is the exceptional locus with $(\Theta_i.\Theta_{i+1}) = 1$ and
     \[
      \hbf \cdot E_0 = \sum_{i=1}^n h_i \Theta_i.
     \]
\end{thm}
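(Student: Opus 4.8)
The plan is to reduce the statement to an explicit monomial computation on the cyclic cover, in the spirit of the proof of Lemma~\ref{lem:log-symmetric-differential-on-III}. An $A_n$-singularity is analytically $\CC^2/G$ with $G = \ZZ/(n+1)\ZZ = \langle g \rangle$ acting by $g\cdot(u,v) = (\zeta u, \zeta^{-1}v)$, $\zeta$ a primitive $(n+1)$-st root of unity, and $\lambda\colon(\widetilde{S},E_0) \to (S,0)$ is the Hirzebruch--Jung resolution, a chain of $(-2)$-curves $\Theta_1,\dots,\Theta_n$. Since $G$ acts freely on $\CC^2 \setminus \{0\}$ and $S^m\Omega_{\CC^2}^1$ is locally free, Hartogs (as in \eqref{eqn:isomorphism-of-log-differential}) gives
\[
 H^0(\widetilde{S} \setminus E_0,\, S^m\Omega_{\widetilde{S}}^1) \simeq H^0(\CC^2,\, S^m\Omega_{\CC^2}^1)^G,
\]
which has the monomial (topological) basis $\{\, u^a v^b\, du^\ell dv^{m-\ell} : a,b \geq 0,\ 0 \leq \ell \leq m,\ a-b+2\ell-m \equiv 0 \pmod{n+1}\,\}$.

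Next I would record the needed valuations. Put $\alpha_i = \ord_{\Theta_i}(u)$ and $\beta_i = \ord_{\Theta_i}(v)$, which lie in $\tfrac1{n+1}\ZZ_{>0}$. Reading off the resolution charts (exactly as for the $(-4)$-curves in the proof of Lemma~\ref{lem:log-symmetric-differential-on-III}, or \cite[Theorem~III.5.4]{BHPVdV2004}) one finds that the $(n+1)\alpha_i$ run through $1,\dots,n$ as $i$ runs along the chain, that $\alpha_i + \beta_i = 1$ (equivalently: the minimal resolution of a Du Val singularity is crepant, so $du\wedge dv$ pulls back to a generator of $\omega_{\widetilde{S}}$ near each $\Theta_i$), and that $\ord_{\Theta_i}(du) = \alpha_i - 1$, $\ord_{\Theta_i}(dv) = \beta_i - 1$; moreover the leading terms of $du$ and $dv$ along $\Theta_i$ are both proportional to the differential of a transverse local coordinate, so there is no cancellation in the symmetric products (the argument of \cite[Section~3]{BTVA2022} applies). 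Hence, for a monomial,
\[
 \ord_{\Theta_i}\!\bigl(u^a v^b\, du^\ell dv^{m-\ell}\bigr) = a\alpha_i + b\beta_i + \ell(\alpha_i-1) + (m-\ell)(\beta_i-1).
\]

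Then I would convert the isomorphism into an order condition. In a transverse coordinate $w$ for $\Theta_i$ the sheaf $S^m\Omega_{\widetilde{S}}^1(\log E_0)\otimes\Ocal_{\widetilde{S}}(-\hbf\cdot E_0)$ has local basis $w^{h_i}(dw/w)^j(d\tau)^{m-j}$; so a form lies in it near the generic point of $\Theta_i$ iff every coefficient in its $(dw/w)^j(d\tau)^{m-j}$-expansion vanishes along $\Theta_i$ to order $\geq h_i$, and for a monomial this ``log order'' equals $\ord_{\Theta_i}(u^a v^b du^\ell dv^{m-\ell}) + m = (a+\ell)\alpha_i + (b+m-\ell)\beta_i \geq 0$. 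In particular every invariant form has at worst logarithmic poles along $E_0$; and since $E_0$ is simple normal crossing and the sheaf above is locally free, Hartogs at the finitely many nodes $\Theta_i\cap\Theta_{i+1}$ shows the global $H^0$ is cut out by the generic-point conditions along the $\Theta_i$ alone. Consequently the natural injection $H^0(\widetilde{S},\, S^m\Omega_{\widetilde{S}}^1(\log E_0)\otimes\Ocal_{\widetilde{S}}(-\hbf\cdot E_0)) \hookrightarrow H^0(\widetilde{S}\setminus E_0,\, S^m\Omega_{\widetilde{S}}^1)$ is an isomorphism if and only if $h_i \leq m_i$ for every $i$, where $m_i$ is the minimum of the log order over the monomial basis; a monomial realizing $m_{i_0}$ witnesses the failure whenever $h_{i_0} > m_{i_0}$, which also yields the sharpness assertion.

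It remains to evaluate $m_i$, and this is the step I expect to be the real work. Putting $\alpha_i = c_i/(n+1)$, $\beta_i = (n+1-c_i)/(n+1)$ and $a-b+2\ell-m = (n+1)t$, the log order becomes $c_i t + b - \ell + m$, to be minimized over $t\in\ZZ$ and $b,\ell \geq 0$, $0 \leq \ell \leq m$, subject to $a = (n+1)t + b - 2\ell + m \geq 0$. Optimizing $b$ and $\ell$ for fixed $t$ collapses this to a one-variable minimum of finitely many linear-in-$t$ expressions, arising from the regimes $(n+1)t \geq m$, $-m \leq (n+1)t < m$ and $(n+1)t < -m$; keeping careful track of floors and ceilings one checks the value equals $\sum_{j=0}^{\min\{i-1,\,n-i\}} \bigl\lceil (m-2j)/(n+1) \bigr\rceil$ (this can equivalently be obtained by analyzing the Hirzebruch--Jung charts of $\widetilde{S}$ one at a time). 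Matching this exact threshold — and hence the `if and only if' together with `not for larger ones' — is where the fine structure of the minimal resolution of $A_n$ genuinely enters; everything preceding it is essentially formal.
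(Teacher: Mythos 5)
Your outline is essentially correct, and it is the same method the paper itself relies on: the paper does not prove this statement at all (it is quoted verbatim from \cite{AOW2023}), but its own proofs of Lemma~\ref{lem:log-symmetric-differential-on-III} and Lemma~\ref{lem:log-symmetric-differential} are exactly the Kodaira/Hirzebruch--Jung chart computation of fractional valuations that you carry out for the local model $\CC^2/\tfrac{1}{n+1}(1,-1)$, so your argument is in effect a self-contained reconstruction of the cited result in the paper's own style (reading the theorem, as intended and as used in the paper, as a statement about a germ or small neighbourhood of the singular point). Your intermediate reductions are all sound: the Hartogs identification with $G$-invariant forms on $\CC^2$, the values $\alpha_i=i/(n+1)$, $\beta_i=1-\alpha_i$, $\ord_{\Theta_i}(du)=\alpha_i-1$ (these match the charts used in the paper for the $A_3$ and $A_1$ points), the fact that the leading parts of $du,dv$ lie in the $dw$-direction so that the log order of a monomial is exactly $(a+\ell)\alpha_i+(b+m-\ell)\beta_i$ with no cancellation, the reduction to generic points of the $\Theta_i$ by local freeness, and the characterization of the sharp threshold as the minimum of this quantity over the invariant monomials. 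The one step you do not actually perform is the closing minimization of $c_i t+b-\ell+m$ over the admissible $(t,b,\ell)$: you assert that it equals $\sum_{j=0}^{\min\{i-1,n-i\}}\lceil (m-2j)/(n+1)\rceil$, and spot checks confirm this (for $A_1$ one gets $\lceil m/2\rceil$; for the middle curve of $A_3$ one gets $\lceil m/4\rceil+\lceil (m-2)/4\rceil=\lceil m/2\rceil$, matching the direct computation), but as written this elementary yet fiddly identity --- which is the entire quantitative content of the sharp bound --- is left unverified, so the proposal is an outline of the right proof rather than a complete one.
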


It only remains to examine the local obstruction for symmetric differentials.
Each quotient singularity of $(\Delta \times E)/G_x$ at $q_{\nu}$ is resolved by a chain of rational curves $\Theta_{\nu 1}$, $\Theta_{\nu 2}$ and $\Theta_{\nu 3}$ if $\nu = 0,1$ and by a rational curve $\Theta_{2}$ if $\nu = 2$.
Let $\Theta$ be the proper transform of the central fibre of $(\Delta \times E)/G_x$.
Then the $f'$-fibre under $x$ is given by
 \[
  4\Theta + 3\Theta_{01} + 2\Theta_{02} + \Theta_{03} + 3\Theta_{11} + 2\Theta_{12} + \Theta_{13} + 2\Theta_2
 \]
with self-intersection numbers $-2$ and
 \begin{align*}
  (\Theta.\Theta_{01}) &= (\Theta_{01}.\Theta_{02}) = (\Theta_{02}.\Theta_{03}) = (\Theta.\Theta_{11}) \\
    &= (\Theta_{11}.\Theta_{12}) = (\Theta_{12}.\Theta_{13}) = (\Theta.\Theta_{2}) = 1.
 \end{align*}
From \cite[II, p.584]{Kodaira1963}, there exist coordinate charts $W_{\nu 1}$, $W_{\nu 2}$, $W_{\nu 3}$, $W_{\nu 4}$, $W_{21}$ and $W_{22}$ with respective local coordinates $(y_{\nu}, s_{\nu 1})$, $(t_{\nu 1},s_{\nu 2})$, $(t_{\nu 2},s_{\nu 3})$, $(x_{\nu},t_{\nu 3})$, $(y_2, s_2)$ and $(x_2, t_2)$ such that
 \[
  \begin{cases}
      s^4 = y_{\nu}^3s_{\nu 1}^4 = t_{\nu 1}^2s_{\nu 2}^3 = t_{\nu 2}s_{\nu 3}^2 = x_{\nu 1}, \\
      c_{\nu}^4 = y_{\nu} = t_{\nu 1}^2s_{\nu 2} = t_{\nu 2}^3s_{\nu 3}^2 = x_{\nu 1}^3t_{\nu 3}^4, \\
      s_2^2 = y_2s_2^2 = x_2, \\
      c_2^2 = y_2 = x_2t_2^2.
  \end{cases}
 \]

\begin{figure}[ht]
    \includegraphics[height=4.3cm]{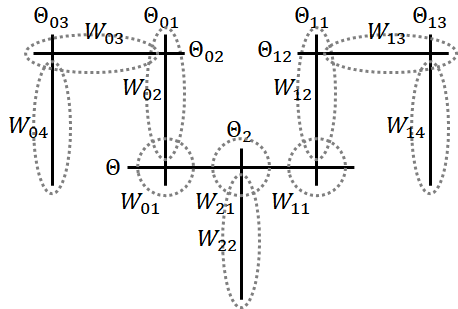}
    \caption{Coordinate charts on the $f'$-fibre of type $III^{\ast}$.}
    \label{fig:figure_III_star_charts}
    \centering
\end{figure}

\noindent In those charts, $\Theta_{\nu 1}$, $\Theta_{\nu 2}$ and $\Theta_{\nu 3}$ are defined by $y_{\nu} = s_{\nu 2} = 0$, $t_{\nu 1} = s_{\nu 3} = 0$ and $t_{\nu 2} = x_{\nu} = 0$ respectively; $\Theta_2$ is defined by $y_2 = x_2 = 0$; $\Theta$ is defined by $s_{\nu 1} = s_2 = 0$.
Also, $\Theta$ does not meet $W_{\nu 2}$, $W_{\nu 3}$, $W_{\nu 4}$ and $W_{22}$.
As before, we visualize the setting in Figure~\ref{fig:figure_III_star_charts}.

On the chart $W_{01}$, we have
 \begin{align*}
  \omega &:= s^{n-k}s_{j,k} ds^{\ell}dc_0^{m-\ell} \\
    &= 4^{-m}y_{01}^{\frac{k+3(n-k)}{4} - \frac{\ell + 3(m-\ell)}{4}}s_{01}^{n-k}(3s_{01}dy_{01} + 4y_{01}ds_{01})^{\ell}dy_{01}^{m-\ell} + O(y_{01}^{\frac{n-3m}{4}+1})
 \end{align*}
and its valuation with respect to $\Theta_{01}$ is at least $\frac{n-3m}{4}$.
If the valuation exceeds $\frac{n-3m}{4}$, then either $\ell > 0$ or $n-k > 0$.
One can see that $\omega|_{W_{01}}$ is divisible by $c_0ds = \frac{3}{4}s_{01}dy_{0} + y_{0}ds_{01}$ if $\ell > 0$, and by $sdc_0 = \frac{1}{4}s_{01}dy_{0}$ if $\ell = 0$ and $n-k > 0$.
The similar calculations for all the other curves $\Theta_{0i}$ yield the following:

\begin{lem} \label{lem:vanishing-order-of-symmetric-differential-for-star}
    For $\omega \in \mathrm{im}(\Phi_{\Delta})$, we have
     \[
      \omega \in H^0(\Delta \times E, I_{(x,e)}^n \otimes S^m\Omega_{C \times E}^1 \otimes \Ocal_{C \times E}(NjA))
     \]
    with $n \geq 3m-4Nj$.
\end{lem}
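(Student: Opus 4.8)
The plan is to mimic the argument for Lemma~\ref{lem:vanishing-order-of-symmetric-differential}, but now applied to the $A_3$- and $A_1$-singularities that appear in the $III^{\ast}$ configuration, using the coordinate charts from \cite[II, p.584]{Kodaira1963} that were set up just before the statement. Concretely, I would first fix $\omega \in \mathrm{im}(\Phi_{\Delta})$ and, by the first part of \cite[Lemma~6.11]{HP2020}, reduce to the case $\omega = \omega_n \in V_{m,n}$ a single homogeneous piece; it suffices to show $\omega_n \neq 0 \Rightarrow n \geq 3m - 4Nj$. Then I would push $\omega_n$ into the charts $W_{01}$ and $W_{11}$ (the ones meeting $\Theta$), where the computation preceding the lemma gives the leading term $4^{-m} y_{01}^{(n-3m)/4} s_{01}^{n-k}(3 s_{01} dy_{01} + 4 y_{01} ds_{01})^{\ell} dy_{01}^{m-\ell} + O(y_{01}^{(n-3m)/4 + 1})$, so that $\mathrm{ord}_{\Theta_{01}}(\omega_n) \geq \tfrac{n-3m}{4}$, with strict inequality exactly when $\ell > 0$ or $n - k > 0$, forced by divisibility of $\omega_n|_{W_{01}}$ by $c_0 ds$ or $s dc_0$ respectively.

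Next I would run the same divisibility analysis across the remaining charts $W_{02}, W_{03}, W_{04}$ of the $A_3$-chain (and symmetrically for the $\nu = 1$ chain), together with $W_{21}, W_{22}$ for the $A_1$-point, using the $A_n$-singularity statement quoted from \cite[Theorem~3.(a)]{AOW2023} with $n = 3$ and $n = 1$ to translate the chart-by-chart vanishing conditions into the bound $h_{\nu i} \leq \sum_{j=0}^{\min\{i-1,\,n-i\}} \lceil (m - 2j)/(n+1) \rceil$. As in the proof of Lemma~\ref{lem:vanishing-order-of-symmetric-differential}, the upshot is that a nonzero piece $\omega_n$ with $\tfrac{n-3m}{4} < \mathrm{ord}_{\Theta_{01}}(\omega_n)$ must be divisible by a power $(s dc_0 - s_{1,1} ds)^{(3m-n)/4}$, and writing $\omega_n = \eta_n \cdot (s dc_0 - s_{1,1} ds)^{(3m-n)/4}$ forces $\eta_n \in H^0(\Delta \times E, S^{(m+n)/4}\Omega^1_{\Delta \times E} \otimes \Ocal_{\Delta \times E}((Nj - \tfrac{3m-n}{4})A))$. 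Since $H^0(E, \Ocal_E(kA_E)) = 0$ for $k < 0$, the factor $\eta_n$ vanishes when $Nj < \tfrac{3m-n}{4}$, i.e. $\omega_n = 0$ unless $n \geq 3m - 4Nj$. Summing over $n$ gives the claimed membership of $\omega$ in $I_{(x,e)}^{\,3m-4Nj} \otimes S^m\Omega^1_{C\times E} \otimes \Ocal_{C\times E}(NjA)$.

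I expect the main obstacle to be bookkeeping rather than a conceptual gap: one must carefully verify that the coordinate transformations in the $A_3$-chain are consistent with those in \cite[II, p.584]{Kodaira1963} (there is a typo-prone system of relations $s^4 = y_\nu^3 s_{\nu 1}^4 = \cdots$, and the exponent $\tfrac{k + 3(n-k)}{4} - \tfrac{\ell + 3(m-\ell)}{4}$ must collapse to $\tfrac{n - 3m}{4}$ as claimed), and that the $III^{\ast}$ string has the stated self-intersection pattern so that \cite[Theorem~3.(a)]{AOW2023} applies with the right value of $n$ on each branch. A secondary point to check is that the branch through $p_2$ (the $A_1$-point, with $g^2$ acting by $(s,c_2) \mapsto (-s,-c_2)$) produces no stronger obstruction than the $A_3$-branches — but since $A_1$ is exactly the $I_0^{\ast}$-type local model already handled in \cite{BTVA2022} and in Lemma~\ref{lem:log-symmetric-differential}(1), this is immediate. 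Finally, the identical argument for $II^{\ast}$ (with an $A_5$-chain, an $A_2$-chain and an $A_1$-point) and for $IV^{\ast}$ (with three $A_2$-chains) goes through verbatim after replacing $4 = e_{III^{\ast}}$ by $6$ and $3$ respectively; I would state this explicitly so that Lemma~\ref{lem:vanishing-order-of-symmetric-differential-for-star} covers all three starred non-isotrivial-monodromy types uniformly.
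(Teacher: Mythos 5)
There is a genuine gap, and it sits exactly at the step you call the ``upshot''. The divisibility by a power of $s\,dc_0 - s_{1,1}\,ds$ is the mechanism for the \emph{unstarred} type $III$, where $g$ acts by $(s,c_\nu)\mapsto(\eta_4 s,\eta_4 c_\nu)$, so that $c_0\,ds - s\,dc_0 = y_0\,ds_0$ is (up to scalar) the unique combination with higher valuation and every factor costs one unit of $A_E$-degree. For $III^{\ast}$ the action is $(s,c_\nu)\mapsto(\eta_4 s,\eta_4^{-1}c_\nu)$, and this changes the invariance structure: $c_0\,ds$ and $s\,dc_0$ are now \emph{separately} $G_x$-invariant, and the paper's own chart computation shows that the local condition is divisibility by $c_0\,ds$ (when $\ell>0$) or by $s\,dc_0$ (when $\ell=0$, $n-k>0$), monomial by monomial --- not by their difference. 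Concretely, the invariant germ $s\,c_0\,ds\,dc_0 = (c_0\,ds)(s\,dc_0)$ (so $m=2$, $n=2$) pulls back to a regular symmetric differential on the $A_3$-resolution (both $c_0\,ds = \tfrac34 s_{01}dy_0 + y_0 ds_{01}$ and $s\,dc_0=\tfrac14 s_{01}dy_0$ are regular in every chart), has $\mathrm{ord}_{\Theta_{01}}=0>\tfrac{n-3m}{4}=-1$, yet is not divisible by $s\,dc_0-c_0\,ds$; so the divisibility you assert is simply false in this setting. Note also that the exponent $\tfrac{3m-n}{4}$ need not even be an integer here, since invariance now reads $n-m+2(\ell-k)\equiv 0 \pmod 4$ rather than $n+m\equiv 0\pmod 4$.

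This is not mere bookkeeping, because your final degree count collapses with the mechanism: the factor $s\,dc_0$ corresponds globally to $s\,dz$, which consumes \emph{no} $A_E$-degree, so ``each factor uses one unit of $Nj$'' no longer yields $n\geq 3m-4Nj$. What the paper actually does in this subsection is keep the two invariant forms separate, record which of them divides each monomial, and then extract the bound from the analogous valuation computations along \emph{all} the curves $\Theta_{01},\Theta_{02},\Theta_{03}$ of the $A_3$-chain (and the $A_1$-curve $\Theta_2$), using the asymmetric weights $\mathrm{ord}_{\Theta_{01}}(s)=\tfrac34$, $\mathrm{ord}_{\Theta_{01}}(c_0)=\tfrac14$, $\mathrm{ord}_{\Theta_{01}}(ds)=-\tfrac14$, $\mathrm{ord}_{\Theta_{01}}(dc_0)=-\tfrac34$ together with the bounds from \cite[Theorem~3.(a)]{AOW2023}. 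Your reduction to homogeneous pieces via \cite[Lemma~6.11]{HP2020}, the leading-term computation on $W_{01}$, and the remark that the $A_1$-branch and the types $II^{\ast}$, $IV^{\ast}$ are handled in the same way are all fine; but to complete the proof you must replace the single-form factorization argument by this two-form, chain-wide analysis and show that it still forces $n\geq 3m-4Nj$.
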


In summary, one can find a bound for the vanishing order of a symmetric differential on $S'$ in terms of the types of singular fibres.

\begin{cor} \label{cor:vanishing-order-of-symmetric-differential}
    If $\omega \in \mathrm{im}(\Phi)$, then for each $x \in Z_{\ast}$, there exists a point $e \in E$ such that
     \[
      \omega \in H^0(C \times E, I_{(x,e)}^n \otimes S^m\Omega_{C \times E}^1 \otimes \Ocal_{C \times E}(NjA))
     \]
    with $n \geq (e_{\ast}-1)m - e_{\ast}Nj$.
\end{cor}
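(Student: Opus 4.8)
The plan is to deduce Corollary~\ref{cor:vanishing-order-of-symmetric-differential} by assembling the local computations already carried out for each singular fibre type. For a fixed $x \in Z_{\ast}$, choose a small disc $x \in \Delta \subset C$ as in the previous subsections, so that $(C \times E)/G$ is locally isomorphic to $(\Delta \times E)/G_x$ and the injection $\Phi$ restricts to $\Phi_{\Delta}$ on $S'|_{\Delta}$. Given $\omega \in \mathrm{im}(\Phi)$, its image under $\Phi_{\Delta}$ lies in $\mathrm{im}(\Phi_{\Delta})$, so we may apply Lemma~\ref{lem:vanishing-order-of-symmetric-differential} (for $\ast \in \{II, III, IV\}$, and also $I_0^{\ast}$ via the same method) or Lemma~\ref{lem:vanishing-order-of-symmetric-differential-for-star} (for $\ast \in \{II^{\ast}, III^{\ast}, IV^{\ast}\}$). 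The first point of \cite[Lemma~6.11]{HP2020} guarantees that each graded piece $\omega_n \in V_{m,n}$ also lies in $\mathrm{im}(\Phi_{\Delta})$, which is what makes the component-by-component analysis legitimate.

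The core of the argument is a type-by-type bookkeeping of the valuation $\mathrm{ord}_{\Theta_0}$ (or $\mathrm{ord}_{\Theta_{01}}$ in the starred cases) computed on the chart $W_{01}$. In each case the leading exponent of $y_0$ (resp. $y_{01}$) in the expansion of $s^{n-k}s_{j,k}ds^{\ell}dc_0^{m-\ell}$ is $\frac{n - (e_{\ast}-1)m}{e_{\ast}}$ — for $III$ and $III^{\ast}$ this is $\frac{n-3m}{4}$, already displayed in the proofs of Lemmas~\ref{lem:vanishing-order-of-symmetric-differential} and \ref{lem:vanishing-order-of-symmetric-differential-for-star}; the analogous expansions for $I_0^{\ast}$ ($e_{\ast}=2$), $II$, $II^{\ast}$ ($e_{\ast}=6$) and $IV$, $IV^{\ast}$ ($e_{\ast}=3$) follow from the coordinate changes recorded in Section~\ref{sec:elliptic-surfaces} together with the order computations in \cite[Section~3]{BTVA2022} and \cite{AOW2023}. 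Repeating the divisibility argument — $\omega_n$ is divisible by the appropriate power of $c_0 ds - s\, dc_0$ (the pullback of the invariant $1$-form $dc$ on $E$) whenever $\mathrm{ord}$ strictly exceeds the leading value — reduces $\omega_n$ to a form with coefficients in $H^0(E, \Ocal_E((Nj - \tfrac{(e_{\ast}-1)m-n}{e_{\ast}})A_E))$, and this group vanishes once $Nj < \frac{(e_{\ast}-1)m-n}{e_{\ast}}$, i.e. once $n < (e_{\ast}-1)m - e_{\ast}Nj$. Hence all surviving graded pieces of $\omega$ vanish at $(x,e)$ to order at least $(e_{\ast}-1)m - e_{\ast}Nj$, where $e$ is the point $[0] \in E$ picked out as the origin in the corresponding local description; summing over the graded pieces gives the stated membership in $H^0(C \times E, I_{(x,e)}^n \otimes S^m\Omega_{C \times E}^1 \otimes \Ocal_{C \times E}(NjA))$.

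The main obstacle is purely organizational rather than conceptual: one must verify that the uniform exponent $\frac{n-(e_{\ast}-1)m}{e_{\ast}}$ genuinely appears for every type, including the starred ones where the Hirzebruch–Jung string has several components and the relevant chart is $W_{\nu 1}$ adjacent to $\Theta$. In each case the key point is that only the $W_{\nu 1}$ chart meets $\Theta$, so the obstruction is detected there and the remaining charts $W_{\nu 2}, W_{\nu 3}, \dots$ only contribute the (weaker) divisibility conditions already subsumed in Lemma~\ref{lem:log-symmetric-differential} and the $A_n$-singularity theorem of \cite{AOW2023}. Once this uniformity is checked, the corollary is immediate. I would therefore present the proof as: (i) localize and invoke \cite[Lemma~6.11]{HP2020} to work with each $\omega_n$; (ii) quote Lemmas~\ref{lem:vanishing-order-of-symmetric-differential} and \ref{lem:vanishing-order-of-symmetric-differential-for-star}, noting that the same computation with $e_{\ast}$ in place of $4$ handles the remaining types; (iii) conclude by the vanishing $H^0(E, \Ocal_E((Nj - \tfrac{(e_{\ast}-1)m-n}{e_{\ast}})A_E)) = 0$.
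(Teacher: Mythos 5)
Your proposal is correct and matches the paper's intended argument: the corollary is stated there as a summary of Lemmas~\ref{lem:vanishing-order-of-symmetric-differential} and \ref{lem:vanishing-order-of-symmetric-differential-for-star}, with the remaining types handled by repeating the same chart computation with $e_{\ast}$ in place of $4$, the reduction to graded pieces via the first part of \cite[Lemma~6.11]{HP2020}, divisibility by the invariant form, and the vanishing of $H^0(E,\Ocal_E(\cdot))$ in negative degree, exactly as you describe. The choice of $e$ as the fixed point of $G_x$ is also the one made in the paper, so no further changes are needed.
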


\begin{proof}[Proof of Theorem~\ref{thm:main-theorem}]
    Assume on the contrary that $f^{\ast}\Omega_B^1(D)$ is not pseudoeffective.
    Then both $B \simeq \PP^1$ and $\sum_{i=1}^s (1-\frac{1}{\nu_i}) < 2$ holds.
    Thus $f'^{\ast}\Omega_B^1(D')$ is not pseudoeffective where
     \[
      D' = \sum_{b \in B} [f'^{-1}(b) - f'^{-1}(b)_{\mathrm{red}}]
     \]
    because multiple fibres of $f'$ do not contain $(-1)$-curves.
    
    Let $g = g(C)$ be the genus of $C$.
    Choose $\varepsilon \in \QQ_{> 0}$ and $N' \in \NN$ such that $\frac{2g-2}{2g-1} + \varepsilon < 1$ and $N'(\frac{2g-2}{2g-1} + \varepsilon) \in \NN$.
    Then for $m \geq \frac{2}{1-\frac{2g-2}{2g-1}-\varepsilon}Nj$, a symmetric differential $\omega \in \mathrm{im}(\Phi)$ induces
     \[
      \omega^{N'} \in H^0 \left( C \times E, \left( \bigotimes_{x \in Z_{\ast}} I_{(x,e)}^{\left( \frac{2g-2}{2g-1}+\varepsilon \right)N'm(e_{\ast}-1)} \right) \otimes S^{N'm}\Omega_{C \times E}^1 \otimes \Ocal_{C \times E}(N'NjA) \right)
     \]
    where for each $x \in C$, a point $e \in E$ is chosen such that $G_x$ fixes $e$.
    From Lemma~\ref{lem:number-of-ramification-points}, we infer that
     \begin{align*}
      \deg_C \left( \bigotimes_{x \in Z_{\ast}} I_{x}^{\left( \frac{2g-2}{2g-1}+\varepsilon \right)N'm(e_{\ast}-1)} \right)
        &= N'm \left( \frac{2g-2}{2g-1}+\varepsilon \right) \sum (e_{\ast}-1) |Z_{\ast}| \\
        &> N'm(2g-2).
     \end{align*}
    Hence the claim of the proof of \cite[Theorem~6.7]{HP2020} applies to prove that $\Phi = 0$, so $\Omega_{S'}^1$ is not pseudoeffective.
    Now $\Omega_S^1$ being pseudoeffective is equivalent to $\Omega_{S'}^1$ being pseudoeffective by \cite[Proposition~4.1]{HP2020}, the proof is complete.
\end{proof}

\bibliographystyle{amsplain}

\begin{thebibliography}{10}

\bibitem{AOW2023} Y. D. Asega, B. de Oliveira, and M. Weiss, \textit{Surface quotient singularities and bigness of the cotangent bundle: Part II}, arXiv:2312.03190, 2023.

\bibitem{BHPVdV2004} W. P. Barth, K. Hulek, C. A. M. Peters, and A. Van de Ven, \textit{Compact complex surfaces}, 2nd ed., Springer, Berlin, 2004.

\bibitem{BTVA2022} N. Bruin, J. Thomas, and A. V\'{a}rilly-Alvarado, \textit{Explicit computation of symmetric differentials and its application to quasihyperbolicity}, Algebra Number Theory \textbf{16}:6 (2022), 1377--1405.

\bibitem{Badescu2001} L. B\u{a}descu, \textit{Algebraic surfaces}, Universitext, Springer, New York, 2001.

\bibitem{CH2023} J. Cao and A. H\"{o}ring, \textit{Direct images of pseudoeffective cotangent bundles}, Pure Appl. Math. Q. (to appear), arXiv:2302.12658, 2023.

\bibitem{FK1992} H. M. Farkas and I. Kra, \textit{Riemann surfaces}, 2nd ed., Graduate Texts in Mathematics 71, Springer-Verlag, New York, 1992.

\bibitem{Hartshorne1977} R. Hartshorne, \textit{Algebraic geometry}, Graduate Texts in Mathematics 52, Springer-Verlag, New York-Heidelberg, 1977.

\bibitem{HLS2022} A. H\"{o}ring, J. Liu, and F. Shao, \textit{Examples of Fano manifolds with non-pseudoeffective tangent bundle}, J. Lond. Math. Soc. \textbf{106}:1 (2022), 27--59.

\bibitem{HP2020} A. H\"{o}ring and T. Peternell, \textit{A nonvanishing conjecture for cotangent bundles}, Ann. Fac. Sci. Toulouse Math. (6) \textbf{32}:5 (2020), 855--892.

\bibitem{HP2021} A. H\"{o}ring and T. Peternell, \textit{Stein complements in compact K\"{a}hler manifolds}, Math. Ann. \textbf{390}:2 (2021), 2075--2111.

\bibitem{JLZ2023} J. Jia, Y. Lee, and G. Zhong, \textit{Smooth projective surfaces with pseudo-effective tangent bundles}, J. Math. Soc. Japan \textbf{77}:1 (2025), 75--102.

\bibitem{Kodaira1963} K. Kodaira, \textit{On compact analytic surfaces. II, III}, Ann. of Math. \textbf{77} (1963), 563--626; ibid. \textbf{78} (1963), 1--40.

\bibitem{PS2020} Y. G. Prokhorov and C. A. Shramov, \textit{Bounded automorphism groups of compact complex surfaces}, Mat. Sb. \textbf{211}:9 (2020), 105--118.

\bibitem{Sakai1979} F. Sakai, \textit{Symmetric powers of the cotangent bundle and classification of algebraic varieties}, in Algebraic geometry (Proc. Summer Meeting, Univ. Copenhagen, Copenhagen, 1978), Lect. Notes Math. \textbf{732}, Springer, 1979, 545--563.

\bibitem{Serrano1996} F. Serrano, \textit{Isotrivial fibred surfaces}, Ann. Mat. Pura Appl. (4) \textbf{171} (1996), 63--81.

\end{thebibliography}

\end{document}